\documentclass{amsart}
\usepackage{xcolor}

\newtheorem{thm}{Theorem}
\newtheorem{theorem}[thm]{Theorem}

\newtheorem{corollary}[thm]{Corollary}

\newtheorem{lemma}[thm]{Lemma}

\newtheorem{proposition}[thm]{Proposition}

\theoremstyle{definition}

\newtheorem{definition}[thm]{Definition}

\theoremstyle{remark}

\newtheorem{claim}{Claim}

\usepackage{amssymb}
\usepackage{amsrefs}

\usepackage{hyperref}

\DefineSimpleKey{bib}{primaryclass}{}
\DefineSimpleKey{bib}{archiveprefix}{}

\BibSpec{arXiv}{%
  +{}{\PrintAuthors}{author}
  +{,}{ \textit}{title}
  +{}{ \parenthesize}{year}
  +{,}{ arXiv }{eprint}
  +{,}{ primary class }{primaryclass}
}

\begin{document}

\title{New examples in the study of selectively separable spaces}
\author{Alan Dow and Hayden Pecoraro}

\address{Department of Mathematics and Statistics,
University of North Carolina at Charlotte, 
Charlotte, NC 28223}
\email{adow@charlotte.edu}
\email{hpecoraronsw@gmail.com}

\keywords{selectively separable, H-separable }

\subjclass{54D65}

\thanks{The authors thank L. Zdomskyy for sharing corrections to the first version of the paper}

\date{\today}

 \begin{abstract} The property of being selectively separable is
 well-studied and generalizations such as H-separable and wH-separable
 have also generated much interest. Bardyla, Maesano, and Zdomskyy proved
 from Martin's Axiom that there are countable regular wH-separable spaces that
 are not H-separable. We prove there is a ZFC example. Their example was
 also Fr\'echet-Urysohn, and we produce two additional examples from 
 weaker assumptions.   
 \end{abstract}

\maketitle

\section{Introduction}

In this paper we prove the following results.

\begin{theorem}  There is a countable Hausdorff 0-dimensional wH-separable space
that is not H-separable.
\end{theorem}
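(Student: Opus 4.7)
The approach begins with a preliminary observation: any topological space with a countable $\pi$-base is H-separable. For, given a countable $\pi$-base $\{V_k : k \in \omega\}$ and a sequence $(D_n)_{n \in \omega}$ of dense sets, choose $x_n^k \in D_n \cap V_k$ (nonempty by density) and set $F_n = \{x_n^k : k \leq n\}$; any nonempty open $V$ contains some $V_k$, so $x_n^k \in V \cap F_n$ for every $n \geq k$, witnessing H-separability. Consequently the desired example must have uncountable $\pi$-weight, and in particular its isolated points cannot be dense (otherwise singletons of isolated points would form a countable $\pi$-base).

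With this constraint in mind, the plan is to build $X$ on a countable underlying set (say $\omega$ together with countably many limit points, or $\omega \times \omega$) with a 0-dimensional Hausdorff topology generated by an uncountable combinatorial family $\mathcal{F}$ --- a $\leq^*$-unbounded family in $\omega^\omega$, an almost disjoint family, or a tower. The family $\mathcal{F}$ induces a collection of ``deep'' open sets $\{U_f : f \in \mathcal{F}\}$ forming an uncountable $\pi$-family, and these are what will witness the failure of H-separability. The challenge at this stage is merely making $X$ 0-dimensional Hausdorff on a countable set while keeping enough opens of the form $U_f$ to matter.

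For the failure of H-separability, one would exhibit a specific dense sequence $(D_n)$ such that, for any finite selection $F_n \subseteq D_n$, the finite supports of the $F_n$'s induce countably many ``bounding'' functions in $\omega^\omega$; unboundedness of $\mathcal{F}$ then supplies an $f \in \mathcal{F}$ whose $U_f$ is missed by $F_n$ for infinitely many $n$, after a pigeonhole argument on the supports. For wH-separability, by contrast, one would exploit a supplementary countable structure (e.g.\ a countable local base at each non-isolated point, or a countable $\pi$-network weaker than a $\pi$-base) to interleave finite selections so that every $U_f$ is hit infinitely often.

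The main obstacle is to choreograph the family $\mathcal{F}$ and the accompanying topology so that both properties hold simultaneously in ZFC: the family must be rich enough that no finite selection can defeat all of its members cofinitely, yet coherent enough that dense sequences admit wH-separating selections. The Bardyla--Maesano--Zdomskyy proof uses MA (specifically $\mathfrak{p} = \mathfrak{c}$) to align filters having both features at once; the ZFC argument must substitute a direct combinatorial diagonalization, likely using only $\mathfrak{b}$ or $\mathfrak{d}$ together with a carefully constructed AD family or unbounded family whose dual role --- combinatorial escape for H-separability yet interleaving compatibility for wH-separability --- is the real content of the theorem.
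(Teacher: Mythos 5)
Your opening observation (countable $\pi$-base implies H-separable, so the example must have uncountable $\pi$-weight) is correct and consistent with the paper, and the general shape you describe --- an unbounded family supplying ``deep'' open sets that defeat finite selections, balanced against some smallness of the topology at each point --- is indeed the right shape. But what you have written is a research plan, not a proof: you yourself defer ``the real content of the theorem'' to an unspecified choreography, and that choreography is exactly where all the work lies. Concretely, two gaps. First, for the failure of H-separability it is not enough to produce open sets $U_f$ missed by the selections for infinitely many $n$; one must simultaneously keep the chosen dense sequence $(D_n)$ dense in the \emph{final} topology while making the ``escape'' sets closed and nowhere dense. The paper does this by indexing the dense sets by the tree $2^{<\omega}$ (a partition $\{Q_t\}$ of $\mathbb Q$ into dense sets), and for each $\alpha<\mathfrak b$ forcing the branch set $B_\alpha=\bigcup_n\bigl(Q_{\rho_\alpha\restriction n}\cap\{q_\ell:\ell<f_\alpha(n)\}\bigr)$ to be closed discrete while every $Q_t$ stays dense; reconciling these two demands is the content of its main recursion lemma and is absent from your sketch.

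Second, your proposed witness for wH-separability --- ``a countable local base at each non-isolated point'' --- is self-defeating: a countable space in which every point has a countable local base is second countable, hence has a countable $\pi$-base, hence is H-separable by your own preliminary observation. The actual mechanism in the paper is different: each point $q$ carries canonical ``fat converging fans'' $I(q,f)$, and the recursion maintains the invariant that every later basic open set meets $I(q,f)$ in a set that differs from an earlier one only by finitely many tree-branches' worth of points. The mH/wH verification then combines this with elementary submodels, a club of reflecting ordinals, and --- crucially --- the hypothesis that the dense sequence is \emph{descending}, which your proposal never uses even though it is the only reason a single function $f$ (rather than one per $D_n$) suffices to build the selections. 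Without an argument that exploits monotonicity, the wH half of your plan cannot go through.
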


\begin{theorem}[$\mathfrak b = \mathfrak c$] There is a countable Hausdorff 0-dimensional
 Fr\'echet-Urysohn space that is not H-separable and has $\pi$-weight $\mathfrak c$.
\end{theorem}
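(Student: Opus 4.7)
The plan is to build the space by a transfinite recursion of length $\mathfrak{c}$, exploiting that $\mathfrak{b} = \mathfrak{c}$ provides a $\leq^*$-increasing unbounded (in fact dominating) scale $\{f_\alpha : \alpha < \mathfrak{c}\}$ in $\omega^\omega$. I would fix the underlying countable set once and for all as $X = (\omega\times\omega) \cup \Lambda$, with $\Lambda$ a designated countable set of ``limit'' points and $\omega\times\omega$ the dense set of isolated points. Basic neighborhoods of each $\lambda \in \Lambda$ will be of the form $\{\lambda\} \cup A$ for suitable $A \subseteq \omega\times\omega$, constructed column by column.

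First I would enumerate as $\{\bar F^\alpha = (F^\alpha_n)_{n<\omega} : \alpha < \mathfrak{c}\}$ all sequences of finite subsets of $\omega\times\omega$, i.e.\ all potential H-separability witnesses. Then I would run the recursion: at stage $\alpha < \mathfrak{c}$, only $|\alpha| < \mathfrak{c} = \mathfrak{b}$ commitments have been made, so I can pick $f_\alpha$ from the scale to $\leq^*$-dominate the column-heights of all the finite data recorded so far. Using $f_\alpha$, define a new basic open set $U_\alpha$, a neighborhood of some carefully chosen $\lambda \in \Lambda$, that avoids $F^\alpha_n$ for infinitely many $n$. Throughout, each $\lambda$'s neighborhood filter is arranged to be generated by a $\subseteq^*$-decreasing tower meshed with a single ``column'' of isolated points, so that whenever $\lambda \in \overline{A}$ the intersection of $A$ with that column yields a sequence converging to $\lambda$.

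After $\mathfrak{c}$ stages, verification proceeds in four steps. 0-dimensional Hausdorffness follows from the clopen basic sets and the care taken to separate points. Fr\'echet-Urysohn holds because every limit point's neighborhood filter is sequentially realized along its column. The $\pi$-weight is $\mathfrak{c}$ because the $\mathfrak{c}$ sets $U_\alpha$ are pairwise non-redundant: any purported $\pi$-base of size ${<}\mathfrak{c}$ has its column-height data bounded by some $f_\beta$ in the scale, and then $U_\beta$ (being ``above'' $f_\beta$ in the relevant column) contains no member of the candidate $\pi$-base. Finally, the space fails to be H-separable because each candidate $\bar F^\alpha$ is explicitly defeated by $U_\alpha$.

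The principal obstacle is reconciling Fr\'echet-Urysohn with $\pi$-weight $\mathfrak{c}$: FU demands that each limit point's neighborhood filter be sequentially rich, while large $\pi$-weight requires that the collection of basic open sets be topologically complex and mutually incomparable. The scale $\{f_\alpha\}$ bridges these by producing neighborhoods that are simultaneously ``columnar'' (each supports one convergent sequence) and diagonally incomparable. The delicate bookkeeping step is ensuring that, at stage $\alpha$, the newly added neighborhood $U_\alpha$ does not violate any Fr\'echet-Urysohn commitment from an earlier stage; this is exactly where $\mathfrak{b} = \mathfrak{c}$ is essential, as it bounds the number of prior commitments and lets us dominate all of them with a single $f_\alpha$ from the scale.
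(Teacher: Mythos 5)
There is a fatal structural flaw at the very first step: you take the underlying space to have a dense set $\omega\times\omega$ of \emph{isolated} points. Any countable space with a dense set of isolated points has a countable $\pi$-base (the singletons of isolated points), and any space with a countable $\pi$-base $\{V_k : k\in\omega\}$ is automatically H-separable: given dense sets $D_n$, put into $F_n$ one point of $D_n\cap V_k$ for each $k\le n$; then for every infinite $A$ and every $k$, all but finitely many $n\in A$ satisfy $F_n\cap V_k\neq\emptyset$, so $\bigcup_{n\in A}F_n$ is dense. Concretely, if $\{e_k : k\in\omega\}$ enumerates the isolated points, the single sequence $F_n=\{e_0,\dots,e_n\}$ defeats your entire diagonalization, since every dense set contains every isolated point and every nonempty open set (in particular each $U_\alpha$) contains some $e_k$ and hence meets $F_n$ for all $n\ge k$. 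So your recursion gets stuck at the stage handling this witness, and for the same reason the $\pi$-weight of your space is $\aleph_0$, not $\mathfrak c$. This is precisely why the paper works with a \emph{crowded} refinement of the rational topology on $\mathbb Q$ and obtains non-H-separability not by one open set per witness but via the tree-indexed family of dense sets $Q_t$ ($t\in 2^{<\omega}$): a selector $(H_t)$ is bounded on each level by some $h\in\omega^\omega$, unboundedness of $\{f_\alpha\}$ gives $h(n)<f_\alpha(n)$ infinitely often, and then the union of the $H_t$ along the corresponding branch segment lies inside the closed discrete set $B_\alpha$.

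A second, independent gap is the Fr\'echet--Urysohn verification. You propose that each limit point's trace filter on its column be a $\subseteq^*$-decreasing tower and that convergence is ``sequentially realized along the column.'' But a tower of length $\mathfrak c$ need not have a pseudo-intersection when $\mathfrak b=\mathfrak c$ (one only gets this below $\mathfrak t\le\mathfrak p$, and $\mathfrak p<\mathfrak b=\mathfrak c$ is consistent, e.g.\ in the Hechler model), so a set $A$ meeting every member of the tower need not contain a convergent sequence. The paper avoids this by explicit bookkeeping: it enumerates all pairs $(r_\beta,A_\beta)\in\mathbb Q\times[\mathbb Q]^{\aleph_0}$, and at stage $\beta+1$ either extracts a convergent sequence $I_\beta\subseteq A_\beta$ while the topology still has weight $<\mathfrak b$ and then commits (via the family $\mathcal I$ and Lemma 8) to keeping $I_\beta$ convergent forever, or else adds a clopen set permanently separating $r_\beta$ from $A_\beta$. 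Your proposal mentions ``Fr\'echet--Urysohn commitments'' but never specifies this catch-and-preserve mechanism, and without it the final topology need not be Fr\'echet--Urysohn.
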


\begin{theorem}[$\mathfrak p = \mathfrak b$] There is a countable Hausdorff 0-dimensional 
 Fr\'echet-Urysohn space that is not H-separable and has $\pi$-weight $\mathfrak b$.
\end{theorem}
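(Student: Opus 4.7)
The plan is to mirror the construction behind Theorem~2 but to replace its $\mathfrak c$-sized building blocks by a family of length $\mathfrak b$, using the hypothesis $\mathfrak p = \mathfrak b$ (equivalently $\mathfrak t = \mathfrak b$ by Malliaris--Shelah) to preserve the Fr\'echet--Urysohn property despite the smaller reservoir.

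Fix a $\leq^*$-increasing, $\leq^*$-unbounded sequence $\{f_\alpha:\alpha<\mathfrak b\}\subseteq\omega^\omega$. On a countable set $X$ -- conveniently realized as $\omega\cup(\omega\times\omega)$ with $\omega\times\omega$ a set of isolated points and the copy of $\omega$ a discrete set of limit points -- I would build, by a recursion of length $\mathfrak b$, a Hausdorff $0$-dimensional topology in which each non-isolated point $x$ receives a $\subseteq^*$-decreasing clopen neighborhood base $\{V_\alpha(x):\alpha<\mathfrak b\}$, with $V_\alpha(x)$ defined using $f_\alpha$ (say, by removing the first $f_\alpha(n)$ elements in the $n$-th column of $\omega\times\omega$ assigned to $x$). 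The isolated points together with the family of all $V_\alpha(x)$'s will form a $\pi$-base of size $\mathfrak b$, and the matching lower bound $\pi w(X)\geq\mathfrak b$ will follow from unboundedness: any family of fewer than $\mathfrak b$ open sets is $\leq^*$-dominated by some $f_\beta$, and then $V_\beta(x)$ contains no member of the family.

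To verify Fr\'echet--Urysohn, suppose $x\in\overline{A}\setminus A$. Each $A\cap V_\alpha(x)$ is infinite and the family $\{A\cap V_\alpha(x):\alpha<\mathfrak b\}$ is, by design, $\subseteq^*$-decreasing, hence a tower of length $\mathfrak b = \mathfrak t$. The recursion must be arranged so that at each stage $\alpha$, having committed to fewer than $\mathfrak b$ tower levels, one uses $\mathfrak p = \mathfrak b$ to choose $V_\alpha(x)$ in such a way that the emerging tower (relativized to every prospective accumulating $A$ on a preregistered list) continues to admit a pseudo-intersection. A standard bookkeeping over triples $(x,A,\alpha)$ then yields, for every accumulating pair $(x,A)$, a pseudo-intersection sequence in $A$ converging to $x$.

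Failure of H-separability rests on the same unbounded family. Fix dense sets $D_n$ as the natural ``$n$-th level'' subsets of $X$ (e.g., the $n$-th row of $\omega\times\omega$). Any candidate finite selection $F_n\subseteq D_n$ produces a function $g\in\omega^\omega$ recording the maximal in-row index used in $F_n$; choose $\alpha$ with $f_\alpha\not\leq^* g$, and observe that $V_\alpha(x)$ is disjoint from $F_n$ for infinitely many $n$, contradicting the H-separability witness. The principal obstacle will be to synchronize the recursion so that the $V_\alpha(x)$'s are simultaneously sharp enough to block H-separability and coherent enough to yield pseudo-intersections whenever accumulation occurs; this is precisely where the full strength of $\mathfrak p = \mathfrak b$ is deployed.
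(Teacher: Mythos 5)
Your proposal breaks down at the very first design decision: realizing the space as $\omega\cup(\omega\times\omega)$ with $\omega\times\omega$ a dense set of isolated points. In any countable space with a dense set $E$ of isolated points, every dense set must contain all of $E$ (an isolated point lies in the closure of $D$ only if it lies in $D$), so your ``rows'' $D_n$ are not dense and cannot witness anything. Worse, such a space is automatically H-separable: enumerate $E=\{e_k:k\in\omega\}$ and put $F_n=\{e_0,\dots,e_n\}\subseteq E\subseteq D_n$; for every infinite $A\subseteq\omega$ the union $\bigcup\{F_n:n\in A\}$ equals $E$, which is dense. And the singletons of isolated points form a countable $\pi$-base, so the $\pi$-weight is $\aleph_0$, not $\mathfrak b$. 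The example has to be crowded, which is why the paper builds a crowded clopen refinement of the rational topology on $\mathbb Q$; there, defeating H-separability is genuinely delicate and is done via Lemma \ref{notH}: the dense sets are the pairwise disjoint crowded-dense sets $Q_t$ indexed by $t\in2^{<\omega}$, and for each $\alpha$ a set $B_\alpha$ threaded along a single branch $\rho_\alpha\in2^\omega$ is made closed and discrete, so that any finite selection, restricted to a suitable infinite antichain-of-nodes-turned-branch-segment $A=\{\rho_\alpha\restriction n:n\in L\}$, lands inside $B_\alpha$ and fails to be dense. Your ``choose $f_\alpha\not\leq^* g$'' step is the right instinct, but it only works once the dense sets are arranged along the tree and the $B_\alpha$ are forced to be closed discrete by the recursion.

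The Fr\'echet--Urysohn argument has an independent fatal gap. Bookkeeping over all pairs $(x,A)$ with $A\in[X]^{\aleph_0}$ requires $\mathfrak c$ stages, and your recursion has length $\mathfrak b$; under $\mathfrak p=\mathfrak b<\mathfrak c$ no preregistered list of length $\mathfrak b$ can cover all candidate sets $A$, and when $\mathfrak b=\mathfrak c$ the bookkeeping approach is exactly the paper's Theorem for $\mathfrak b=\mathfrak c$, where $\mathfrak p$ plays no role. Moreover, even for the registered pairs, giving each point a $\subseteq^*$-decreasing neighborhood base of length $\mathfrak b=\mathfrak t$ produces traces $\{A\cap V_\alpha(x):\alpha<\mathfrak b\}$ that form a tower of length exactly $\mathfrak t$, and by the very definition of $\mathfrak t$ such towers can fail to have pseudo-intersections; having one at every stage $\alpha<\mathfrak b$ is automatic from $|\alpha|<\mathfrak p$ and does not yield one at stage $\mathfrak b$. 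The paper's route is structurally different: it does no enumeration of pairs at all, instead maintaining (inductive condition \ref{mH1}, motivated by Proposition \ref{ringRule}) that every later neighborhood of $q$ agrees on $I(q,\hat f_{\delta_\mu})$ with an earlier one up to finitely many of the branch sets $B_{\xi+1}$, and then, given $q\in\overline A$, uses a chain of elementary submodels together with $|\tau_\mu|<\mathfrak p$ to extract a sequence $S\subseteq A$ converging at an intermediate stage whose $\varphi$-image is an anti-chain in $2^{<\omega}$; such an $S$ is almost disjoint from every $B_{\xi+1}$ and therefore survives to converge in $\tau_{\mathfrak b}$. Without some analogue of that ``anti-chain versus branch'' mechanism, your construction cannot certify convergence in the final topology.
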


\begin{corollary}[$\mathfrak c \leq \omega_2$] There\label{cor} is a countable Hausdorff 0-dimensional
Fr\'echet-Urysohn space that is not H-separable and has $\pi$-weight $\mathfrak b$.
\end{corollary}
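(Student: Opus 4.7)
The plan is to derive the corollary as an immediate dichotomy from Theorems~2 and~3, using only the standard ZFC inequalities $\omega_1 \le \mathfrak p \le \mathfrak b \le \mathfrak c$.

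Assume $\mathfrak c \le \omega_2$. I would split into two cases according to whether $\mathfrak b = \mathfrak c$ or $\mathfrak b < \mathfrak c$. In the first case, Theorem~2 applies directly and yields a countable Hausdorff 0-dimensional Fr\'echet-Urysohn space that is not H-separable and has $\pi$-weight $\mathfrak c = \mathfrak b$, which is exactly what the corollary asserts.

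In the second case, $\mathfrak b < \mathfrak c \le \omega_2$ forces $\mathfrak b = \omega_1$ and $\mathfrak c = \omega_2$. Since $\omega_1 \le \mathfrak p \le \mathfrak b = \omega_1$, we get $\mathfrak p = \mathfrak b = \omega_1$. Thus the hypothesis of Theorem~3 is satisfied, and Theorem~3 produces the required space with $\pi$-weight $\mathfrak b$.

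There is essentially no obstacle here: the only thing to verify is the cardinal-arithmetic observation that under $\mathfrak c \le \omega_2$ one of the two hypotheses $\mathfrak b = \mathfrak c$ or $\mathfrak p = \mathfrak b$ must hold, and this is immediate from $\mathfrak p \ge \omega_1$ together with $\mathfrak p \le \mathfrak b \le \mathfrak c$. The corollary is then just a matter of invoking whichever of Theorem~2 or Theorem~3 the case assumption triggers.
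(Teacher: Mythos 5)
Your proof is correct and is precisely the intended argument: the paper gives no explicit proof of the corollary, presenting it as an immediate consequence of Theorems~2 and~3 via exactly the dichotomy you describe ($\mathfrak c\le\omega_2$ forces either $\mathfrak b=\mathfrak c$ or $\mathfrak b=\omega_1=\mathfrak p$). Nothing is missing.
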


A space $X$ is Fr\'echet-Urysohn if for every $A\subset  X$ and $x\in \overline{A}\setminus A$,
there is a sequence $S\subset A$ converging to $x$. Every separable Fr\'echet-Urysohn space
is selectively separable \cite{BaDo}. It was observed in \cite{BaMaZd}
that it followed from the results in \cite{GarySakai} that  every 
separable Fr\'echet-Urysohn space is wH-separable.  Corollary \ref{cor}
is included because it  is not known if, in ZFC,
 there is a countable Fr\'echet-Urysohn space with $\pi$-weight exactly 
  $\mathfrak b$. 

In \cite{GarySakai},  
the modifier wH-separable was used to indicate a
\textit{weakening} of H-separable (also introduced in \cite{BBM}
and named for Hurewicz), but we will follow \cite{BaMaZd} and use mH-separable (for \textit{monotone})
as the term for wH-separable.

 \begin{definition} A space $X$ is M-separable, respectively H-separable,
 if for every sequence $\{ D_n : n\in\omega\}$
  of dense subsets, there is a selection $F_n\subset D_n$ ($n\in\omega$) of finite sets
  satisfying that $\bigcup\{ F_n :n\in\omega\}$ is dense in $X$, respectively, for every
   infinite $A\subset \omega$, $\bigcup\{ F_n : n\in A\}$ is dense in $X$.
 \end{definition}

 A space $X$ is mH-separable if it satisfies the conclusion for H-separable for 
 every monotone descending sequence of dense subsets of $X$.  If $\{ D_n :
 n\in\omega\}$ is a sequence of dense sets in a  countable Fr\'echet-Urysohn
 space $X$  then for every $x\in  X$, there is a  
 selection $d^x_n\in D_n$ (for $n\in\omega$) such that $\{ d^x_n :n\in\omega\}$
 has a subsequence   that 
 converges to $x$.  
 If the sequence $\{ D_n : n\in\omega\}$
 is descending then the selector sequence $\{ d^x_n : n\in\omega\}$  
 can be chosen so that the full sequence converges to $x$. 
 If the full selector sequence can be chosen to  converge to $x$,
  for every $x\in X$, then we obtain the H-separable conclusion
  for the sequence $\{ D_n : n\in\omega\}$. 
  This is the essence of why countable Fr\'echet-Urysohn spaces 
  are mH-separable, but can fail to be H-separable.

In this paper we partially answer Question 1.3 of \cite{BaMaZd} by constructing, in ZFC,
an
example of regular countable  mH-separable space that is not H-separable. However
the question remains if there is a
Fr\'echet-Urysohn example.  We do however obtain Fr\'echet-Urysohn examples from
 weaker hypotheses than those in \cite{BaMaZd}.

\section{The main construction}

    In this section we introduce
 some basic ideas that are used.  This introduction should help motivate
 and make the final construction more natural. 
 In each of the main results we construct, by recursion, a crowded 
 clopen  basis    $\tau_{\mathfrak b} = \bigcup_{\alpha<
 \mathfrak b}\tau_\alpha
 $ on $\mathbb Q$ containing
 some  standard countable clopen base  $\tau_0$ for the rational
 topology on $\mathbb Q$.  By the recursion the sequence
  $\{\tau_\alpha : \alpha < \mathfrak b\}$ will be an increasing
  chain of clopen bases such that $\tau_\alpha$ has cardinality
  at most $|\alpha+\omega|$. 
 Let us also fix an enumeration
  $\{ q_n : n\in\omega\}$ of $\mathbb Q$.
  
 One such basic idea is that some  of these properties can be localized.
  Clearly a space $X$ is mH-separable if for every point $x\in X$
  and descending sequence $\{ D_n : n\in\omega\}$ of dense subsets of 
   $X$, there is a sequence of finite sets $H^x_n\subset D_n$ satisfying
   that for every  open $x\in W$,
$W\cap H^x_n$ is not empty for all but finitely many $n\in\omega$. 

It is well-known that a  separable space with character (in fact $\pi$-character) 
less than $\mathfrak b$
 is mH-separable. Let us combine this idea with a new idea for our results.
  
 For each $q\in \mathbb Q$, fix a strictly descending
  local base $\{ U(q,n) :n\in \omega\}\subset \tau_0$ for $q$ with respect
  to $\tau_0$. By a slight abuse of notation 
    let $\tau_0$ be the actual set $\{ \{U(q,n):n\in\omega\} : q\in\mathbb Q\}$
    so indexed.
  
  \begin{definition}
  For any $f\in\omega^\omega$
  and $q\in \mathbb Q$, define
  $$I(q,f) = \bigcup 
   \{ \{q_\ell : \ell<f(n)\}\cap \left( U(q,n)\setminus U(q,n{+}1)\right) :
   n\in\omega\}~.$$
   For convenience, also let  $I(q,f,n)$ be 
   the finite set $I(q,f)\cap \left(U(q,n)\setminus U(q,n{+}1)\right)$.
   \end{definition}

Recall that $H(\mathfrak c^+)$ is the set of sets whose transitive
closure have cardinality at most $\mathfrak c$.  See  \cite{Kunen}
for the definition and for the verification of the following
minor strengthening   of the Lowenheim-Skolem theorem.

\begin{proposition} For any regular cardinal $\kappa\leq \mathfrak c$
and\label{getModels}
any $S\subset H(\mathfrak c^+)$ of cardinality less than $\kappa$,
there is an elementary submodel $M$ of $H(\mathfrak c^+)$ satisfying
that $S\subset M$, $|M|<\kappa$, and $M\cap \kappa$ is an element of $\kappa$.
\end{proposition}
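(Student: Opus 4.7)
The plan is to iterate the ordinary downward Lowenheim-Skolem theorem $\omega$-many times, arranging at each stage that a longer and longer initial segment of $\kappa$ is absorbed into the model as an actual subset. The union will then have its intersection with $\kappa$ downward closed, which is exactly the condition needed for that intersection to itself be an ordinal below $\kappa$.

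First I would apply ordinary Lowenheim-Skolem to produce $M_0 \prec H(\mathfrak c^+)$ with $S \subseteq M_0$ and $|M_0| < \kappa$; this is possible because $|S| < \kappa$ and $\kappa$ is assumed uncountable. Given $M_n$ with $|M_n| < \kappa$, I would set $\delta_n = \sup(M_n \cap \kappa)$, which is strictly below $\kappa$ by the regularity of $\kappa$ together with the fact that $|M_n \cap \kappa| < \kappa$. Then Lowenheim-Skolem is applied again to pick $M_{n+1} \prec H(\mathfrak c^+)$ with $M_n \cup \delta_n \subseteq M_{n+1}$ and $|M_{n+1}| < \kappa$; this is legal because $|M_n \cup \delta_n| < \kappa$. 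Finally set $M = \bigcup_n M_n$.

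By the Tarski-Vaught chain lemma, $M \prec H(\mathfrak c^+)$. The cardinality $|M|$ is a countable union of sets of size less than $\kappa$, hence less than $\kappa$ by the regularity of $\kappa$. Clearly $S \subseteq M_0 \subseteq M$. For the last clause, any $\alpha \in M \cap \kappa$ lies in some $M_n$, so $\alpha < \delta_n$, and then every $\beta < \alpha$ lies in $\delta_n \subseteq M_{n+1} \subseteq M$. Hence $M \cap \kappa$ is a downward closed set of ordinals, i.e.\ itself an ordinal, and its cardinality is at most $|M| < \kappa$, giving $M \cap \kappa \in \kappa$. The one point requiring care is that in the recursion one must put \emph{every} element of $\delta_n$ into $M_{n+1}$, not merely the ordinal $\delta_n$ itself, since having $\delta_n \in M_{n+1}$ alone does not force every ordinal below $\delta_n$ into the submodel; otherwise no real obstacle is anticipated, as the argument is a standard variation on Lowenheim-Skolem.
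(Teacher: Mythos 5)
Your proof is correct and is exactly the standard closing-off argument (an $\omega$-chain of elementary submodels absorbing the initial segments $\delta_n=\sup(M_n\cap\kappa)$, using regularity of $\kappa$ at each step and at the union); the paper gives no proof of its own, deferring to Kunen, and this is precisely the verification it has in mind. Your side remark that $\kappa$ must be uncountable is a fair observation, since the statement is vacuous or false for $\kappa=\omega$ and is only ever applied with uncountable $\kappa$.
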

   
   For any sufficiently large (mod finite) $f\in\omega^\omega$,
   $I(q,f)$ is  a sequence that converges to $q$ with respect
   to $\tau_0$. For any  $q\in \mathbb Q$, $f\in\omega^\omega$, and $\alpha<\mathfrak b$,
    we can consider $\{q\}\cup I(q,f)$ as a subspace of $
    (\mathbb Q, \tau_\alpha)$ which will have character less than $\mathfrak b$. 
 One approximation to one of our main innovations is that for selected
 $f_\alpha\in\omega^\omega$, the subspace topology on $\{q\}\cup
  I(q,f_\alpha)$ with respect to $\tau_\alpha$ will (almost) agree
  with the subspace 
   topology on $\{q\}\cup
  I(q,f_\alpha)$ with respect to $\tau_\beta$ for all $\alpha
  \leq\beta< \mathfrak b$. 

  \begin{proposition} If $\{ D_n : n\in\omega\}$ is a  descending
  sequence of $\tau $-dense subsets of $\mathbb Q$
  for some clopen basis $\tau\supset\tau_0$ with $|\tau|<\mathfrak b$,
  there is a sufficiently
  large $f\in\omega^\omega$, such that, for each $q\in \mathbb Q$ and
  $q\in W\in \tau$,  $W\cap H^q_n $ is not empty
  for all but finitely many $n\in\omega$,
   where $H^q_n$ is the finite set $  I(q,f)\cap D_n \cap 
   U(q,n)\setminus U(q, {f(n)})$
   for all $n\in\omega$.    
  \end{proposition}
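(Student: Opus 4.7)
The plan is to convert $|\tau| < \mathfrak{b}$ into domination of a suitable family of functions in $\omega^\omega$. For each pair $(q,W)$ with $q \in \mathbb{Q}$ and $q \in W \in \tau$, define $g_{q,W} \in \omega^\omega$ by
\[ g_{q,W}(n) = \min\{\ell : q_\ell \in D_n \cap W \cap (U(q,n)\setminus U(q,n{+}1))\} \]
when this set is nonempty, and $g_{q,W}(n)=0$ otherwise. The minimum is in fact defined for all but finitely many $n$: once $n$ is large enough that $U(q,n) \subseteq W$, the annulus $U(q,n)\setminus U(q,n{+}1)$ is a nonempty $\tau$-clopen subset of $W$, and $\tau$-density of $D_n$ forces a point of $D_n$ to land in it.

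Since $\mathbb{Q}$ is countable, the family $\{g_{q,W} : q \in W \in \tau\}$ has cardinality at most $\aleph_0 \cdot |\tau| < \mathfrak{b}$. By the definition of $\mathfrak{b}$, some $f \in \omega^\omega$ dominates every $g_{q,W}$ modulo finite, and by pointwise enlargement I may also require $f(n) > n+1$ for all $n$, which makes $f$ sufficiently large in the sense of the paragraph preceding the proposition.

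To verify the conclusion, fix $q$ and $W \in \tau$ with $q \in W$. For all but finitely many $n$, both $g_{q,W}(n) < f(n)$ holds and $q_{g_{q,W}(n)}$ lies in $D_n \cap W \cap (U(q,n)\setminus U(q,n{+}1))$. The index bound places $q_{g_{q,W}(n)}$ in $I(q,f,n) \subseteq I(q,f)$; the requirement $f(n) > n{+}1$ gives $U(q,n)\setminus U(q,n{+}1) \subseteq U(q,n)\setminus U(q,f(n))$; and the $D_n \cap W$ membership then shows $q_{g_{q,W}(n)} \in H^q_n \cap W$. Finiteness of $H^q_n$ itself is automatic, since $I(q,f) \cap (U(q,n)\setminus U(q,f(n)))$ is the finite union $\bigcup_{n \le k < f(n)} I(q,f,k)$ of finite sets.

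The main obstacle is simply spotting the right invariant to bound; once one commits, for each $(q,W)$, to encoding the cheapest witness of density inside the single annulus at level $n$, the proof reduces to the definition of $\mathfrak{b}$ together with elementary bookkeeping about the nested neighborhoods $U(q,\cdot)$. No diagonalization or genericity argument is needed at this stage; the work in the later construction will be to enlarge $\tau$ without destroying the property this proposition establishes.
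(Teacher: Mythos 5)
There is a genuine gap, and it sits exactly where the proposition is doing real work. You write that ``once $n$ is large enough that $U(q,n)\subseteq W$, the annulus $U(q,n)\setminus U(q,n{+}1)$ is a nonempty $\tau$-clopen subset of $W$.'' But the sets $U(q,n)$ were fixed as a local base at $q$ \emph{with respect to $\tau_0$ only}; the basis $\tau$ generates a (typically strictly) finer topology, and a $\tau$-neighborhood $W$ of $q$ need not contain any $U(q,n)$ at all. Indeed, in the constructions this proposition is preparing for, the final topology is nowhere first countable and a basic neighborhood $W=U^\alpha_m$ meets each annulus only in a proper subset and may miss infinitely many annuli entirely. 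All that can be guaranteed (since $q$ is not $\tau$-isolated and $\tau\supset\tau_0$) is that $W$ meets \emph{infinitely many} of the annuli $U(q,n)\setminus U(q,n{+}1)$, not cofinitely many. Consequently your $g_{q,W}(n)$, which looks for a witness inside the single annulus at level $n$, may be undefined (and defaulted to $0$) for infinitely many $n$, and the verification paragraph breaks down: for those $n$ there is no point $q_{g_{q,W}(n)}\in D_n\cap W\cap(U(q,n)\setminus U(q,n{+}1))$ to place into $H^q_n\cap W$.

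This is precisely why $H^q_n$ is defined using the block $U(q,n)\setminus U(q,f(n))$ rather than the single annulus $U(q,n)\setminus U(q,n{+}1)$: the witness for level $n$ is allowed to live in a later annulus $U(q,m)\setminus U(q,m{+}1)$ with $n\le m<f(n)$. The repair is to redefine $g_{q,W}(n)$ as the least $N$ such that some $q_\ell$ with $\ell<N$ lies in $D_n\cap W\cap(U(q,m)\setminus U(q,m{+}1))$ for some $n\le m<N$; such an $N$ exists for \emph{every} $n$, because $W$ meets some annulus at a level $m\ge n$ and $D_n$ is $\tau$-dense in that nonempty $\tau$-open set. With that change your domination argument goes through and recovers the paper's proof: taking $f$ strictly increasing and dominating the family $\{g_{q,W}\}$, for each $n$ with $g_{q,W}(n)<f(n)$ the witness has index $\ell<f(n)\le f(m)$, so it belongs to $I(q,f)$, and it lies in $U(q,n)\setminus U(q,f(n))$ because $n\le m<f(n)$. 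Your replacement of the paper's elementary-submodel bookkeeping by a direct appeal to unboundedness applied to the family of size $\aleph_0\cdot|\tau|<\mathfrak b$ is fine and slightly more elementary; the defect is solely the single-annulus definition of $g_{q,W}$ and the unjustified inclusion $U(q,n)\subseteq W$.
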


 \begin{proof}
We intend to make heavy use of elementary submodels so we start here.
Let  $\tau_0,
\tau $ and $\{ D_n : n\in\omega\}$ all be elements of an
elementary submodel $M$ of $H(\mathfrak c^+)$ and assume that
$\tau \subset M$ and 
$|M|<\mathfrak b$. 
Choose any $f\in\omega^\omega$ satisfying that $g<^* f$ for all 
 $g\in M\cap \omega^\omega$.  Consider any  $q\in \mathbb Q$
 and $q\in
 W\in \tau $. By assumption $q$ is not $\tau $-isolated
 and $\tau_0\subset\tau $, hence, for infinitely
 many $n\in\omega$,  
   $W\cap U(q,n)\setminus U(q,n{+}1)$
 is non-empty and clopen. 
 Therefore,
  there is a function $g_W\in M\cap \omega^\omega$  
  satisfying that, for all $n\in\omega$,
   there are $n\leq m,\ell<g_W(n)$ such 
   that $ q_\ell\in D_n\in U(q,n)\setminus U(q,m)$. 
Now set, for $n\in\omega$, $H^q_n = I(q,f)\cap D_n\cap
  U(q,n)\setminus U(q, {f(n)})$. It follows that, for all 
   $q\in W\in \tau$ and $n\in\omega$ such
   that $g_W(n)<f(n)$, 
   $W\cap H^q_n\neq\emptyset$ is non-empty. 
 \end{proof}

Requiring that, for each $\alpha<\mathfrak b$, there is
an $f_\alpha\in\omega^\omega$ so that, for each $q\in \mathbb Q$,
 the $\tau_\beta$-subspace topology on $I(q,f_\alpha)$ equals
 the $\tau_\alpha$-subspace topology for all $\alpha \leq \beta <\mathfrak b$,
  is indeed too strong. For example  it would follow that the $\tau_{\mathfrak b}$
  topology
   would be an $\alpha_1$-space as in \cite{Arh1}. On the other
   hand it is consistent that Fr\'echet-Urysohn $\alpha_1$-spaces
   are first countable (see \cite{DowSteprans}). 

   However the next basic result helps  capture
    the   sense in which the  $\tau_{\mathfrak b}$-subspace topology
   on $\{ q\}\cup I(q,f)$ should   almost agree with   
   the $\tau_\alpha$-subspace topology.  It is influenced
   by the classical fact that a $\psi$-like space built from
   an almost disjoint family on $\omega$ is Fr\'echet-Urysohn
   so long as the family is nowhere maximal.

   \begin{proposition} 
   Assume    that $\varphi$ is a\label{ringRule}  
   function from $I(q,f)$ to $2^{<\omega}$
and let $ \tau\supset\tau_0$ be a clopen base 
for a topology on $\mathbb Q$. Assume that $q$ is in 
the closure of some $A\subset I(q,f)$ with respect to
$\tau$. Let $\sigma\supset  \tau$ be another
clopen base for a topology on $\mathbb Q$ that satisfies
    for each $q\in W\in \sigma$, there is a finite
   subset $\{ \rho_i : i\in\ell\}\subset 2^\omega$ 
   and a $U\in  \tau $,
   $$W\cap I(q,f) =^* U\cap I(q,f) \setminus
     \{ \varphi^{-1}(s) : s\in \{ \rho_i \restriction n: i\in\ell, \ n\in\omega\}~\}~.$$
Then  $q$ is in the $\sigma$-closure of $A$ unless there is some $U\in\tau$
such that 
$T_{A\cap U} = \{  s\in 2^{<\omega} :
 (\exists a \in A\cap U) s \leq \varphi(a)  \}$ has only finitely many
infinite branches.
   \end{proposition}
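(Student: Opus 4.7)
The plan is to argue the contrapositive: assume $q$ is \emph{not} in the $\sigma$-closure of $A$, and exhibit a $U \in \tau$ for which $T_{A\cap U}$ has only finitely many infinite branches. Fix a $\sigma$-open $W$ with $q \in W$ and $W \cap A = \emptyset$. Applying the hypothesis on $\sigma$ to $W$ produces $U \in \tau$, a finite $\ell$, and $\{\rho_i : i \in \ell\} \subset 2^\omega$ with
$$W \cap I(q,f) =^* U \cap I(q,f) \setminus \varphi^{-1}(B), \qquad B := \{\rho_i \restriction n : i \in \ell,\ n \in \omega\}.$$
Since $A \subset I(q,f)$ and $W \cap A = \emptyset$, intersecting the displayed identity with $A$ forces $(U \cap A) \setminus \varphi^{-1}(B)$ to be finite; equivalently, for all but finitely many $a \in A \cap U$, $\varphi(a)$ is an initial segment of some $\rho_i$.

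I would then claim that every infinite branch $\rho$ of $T_{A \cap U}$ must coincide with one of the $\rho_i$'s, which bounds the number of infinite branches by $\ell$. For a given branch $\rho$ and each $n$, pick $a_n \in A \cap U$ with $\rho\restriction n \leq \varphi(a_n)$. For cofinitely many $n$, $\varphi(a_n)$ is an initial segment of some $\rho_{i(n)}$ with $i(n) \in \ell$, and hence $\rho \restriction n$ is itself an initial segment of $\rho_{i(n)}$. Because $\ell$ is finite, the pigeonhole principle yields $i^\star \in \ell$ such that $\rho \restriction n \leq \rho_{i^\star}$ for infinitely many $n$, and therefore $\rho = \rho_{i^\star}$, contradicting $\rho \notin \{\rho_i : i \in \ell\}$.

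The whole argument is essentially bookkeeping around the $=^*$ datum combined with pigeonhole, so I do not expect a genuine obstacle. The one delicate point is that the hypothesis only supplies a \emph{mod finite} equality, so the finitely many exceptional $a \in A \cap U$ with $\varphi(a) \notin B$ must not block the branch extraction; this is automatic because we are choosing one witness $a_n$ per $n$ and can discard those few exceptions by passing to a tail. It is also worth noting that the assumption $q \in \overline{A}^\tau$ does not seem to enter the core estimate — it only serves as context guaranteeing that $A$ accumulates to $q$ in the ambient $\tau$-topology, which is what makes the conclusion meaningful.
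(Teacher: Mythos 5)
Your proof is correct and follows essentially the same route as the paper's: pick a $\sigma$-neighborhood $W$ of $q$ missing $A$, apply the structural hypothesis on $\sigma$ to get $U$ and the $\rho_i$'s, and conclude that $T_{A\cap U}$ is (modulo the finitely many exceptional points coming from $=^*$) contained in the union of the $\ell$ chains $\{\rho_i\restriction n : n\in\omega\}$, hence has at most $\ell$ infinite branches. The paper stops at ``$T_{A\cap U}$ is a subset of $\{\rho_i\restriction n : i\in\ell,\ n\in\omega\}$'' and leaves the pigeonhole extraction of branches implicit; you spell it out, and your handling of the mod-finite exceptions is sound since any witness for $\rho\restriction n$ with $n$ large must lie outside the finite exceptional set.
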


\begin{proof}   The proof is straightforward. Suppose that $q\in W\in\sigma$
and $W\cap A$ is empty. Choose $U\in \tau$
and finite $\{ \rho_i : i<\ell\}\subset 2^\omega$,
such   that
 $W\cap I(q,f) =^* U\cap I(q,f)\setminus\{ \varphi^{-1}(s): 
  s\in \{ \rho_i \restriction n: i\in\ell, \ n\in\omega\}~\}$. 
  It follows that  $T_{A\cap U} $ is a subset of
  $ \{ \rho_i \restriction n: i\in\ell, \ n\in\omega\}$. 
 \end{proof}

For a set $A\subset\mathbb Q$ and clopen basis $ \tau\supset
\tau_0$, 
let
  $A^{(1)}$ be the set $A$ together with all points
   $q$ that are limits of $  \tau$-converging
   sequences that are contained in $A$. We recall from
   \cite{DowFrechet} that if
   $ \tau$ has cardinality less than
    $\mathfrak b$,  then $\left( A^{(1)}\right)^{(1)}
     = A^{(1)}$.

This next result introduces our main recursive step for constructing
topologies.

\begin{lemma} Suppose that $\tilde \tau \supset \tau_0$ is a
clopen base\label{mainLemma} for a topology on $\mathbb Q$ and let $\mathcal I$
be a family of $\tilde \tau $-converging sequences with $
|\tilde \tau | + |\mathcal I|<\mathfrak b$. Let $\tilde f$ be any
member of $\omega^\omega$.

Let $B\subset \mathbb Q$ be any set that is almost disjoint from each $I\in \mathcal I$.
Also let $A$ be any subset of $\mathbb Q$ and suppose that $r\in \mathbb Q\setminus A$
is not the limit of any  $\tilde \tau $-converging sequence contained in $A$.
 \medskip

 Then there is a function $\tilde f\leq f\in \omega^\omega$ and a
 clopen base $\sigma\supset \tilde \tau$ such that
 \begin{enumerate}
 \item   $I\subset^* I(q,f)$ for each $q\in \mathbb Q$ and $I\in\mathcal I$ that converges
 to $q$,
 \item $\{U_n: n\in\omega\} = \sigma\setminus \tilde \tau$ is a countable partition of $\mathbb Q$,
 \item every element of $\mathcal I$ is a $\sigma$-converging sequence,
 \item $B$ is closed and discrete with respect to $\sigma$,
 \item $r\in U_0$  and $U_0$ is disjoint from  $A$,
 \item for each $0<n\in\omega$ and $ q\in   U_n$,  
    $U_n\cap I(q,f) =^* I(q,f)\setminus B$,
  \item for $q\in U_0 $, 
   $U_0\cap I(q,f) =^* I(q,f) \setminus (B\cup A^{(1)})$.
 \end{enumerate}
 \end{lemma}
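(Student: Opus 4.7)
The plan is to first pick $f$ by a standard dominating argument in an elementary submodel, and then build the partition $\{U_n\}$ by combining a geometric fact about shell intersections with a recursive claim procedure.

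By Proposition~\ref{getModels}, fix an elementary submodel $M\prec H(\mathfrak c^+)$ of cardinality less than $\mathfrak b$ with $\{\tilde\tau,\tilde f,B,A,r\}\cup\mathcal I\subset M$ (so in particular $\mathcal I\subset M$), and choose $f\geq\tilde f$ in $\omega^\omega$ dominating every $g\in M\cap\omega^\omega$. For any $I\in\mathcal I$ converging to some $q$, the function $g_I\in M$ sending $n$ to the largest index of a point of $I$ lying in the shell $U(q,n)\setminus U(q,n+1)$ is dominated by $f$, so $I\subset^* I(q,f)$, proving (1).

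The central geometric input is that for distinct $p,p'\in\mathbb Q$ the set $I(p,f)\cap I(p',f)$ is finite: for large enough $n,n'$ the shells $U(p,n)\setminus U(p,n+1)$ and $U(p',n')\setminus U(p',n'+1)$ are disjoint by the $\tau_0$-Hausdorffness of $\mathbb Q$. I set $U_0=\{r\}\cup(\mathbb Q\setminus(B\cup A^{(1)}))$, which immediately yields (5) (since $r\notin A\cup A^{(1)}$ and $A\subset A^{(1)}$) and (7) (the symmetric difference between $U_0\cap I(q,f)$ and $I(q,f)\setminus(B\cup A^{(1)})$ is at most $\{r\}$). Then I enumerate $(B\cup A^{(1)})\setminus U_0=\{p_k:k\in\omega\}$ and recursively define $U_{k+1}$ to contain $p_k$ together with a cofinite subset of $I(p_k,f)\setminus B$ not already claimed by $U_{j+1}$ for $j<k$; the pairwise finite intersections of the $I(p_\bullet,f)$ guarantee $U_{k+1}\cap I(p_k,f)=^* I(p_k,f)\setminus B$, giving (6) for the principal point, and (4) follows because each $b\in B$ lands in its own $U_n$.

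The main obstacle is to extend (6) to \emph{every} $q\in U_n$, not just the principal $p_k$, while preserving (7): the naive recursive claim pulls so many points of $\mathbb Q\setminus(B\cup A^{(1)})$ out of $U_0$ that (7) threatens to break. I would resolve this by invoking Proposition~\ref{ringRule}: fixing a tree-coding $\varphi$ of the relevant points into $2^{<\omega}$ under which each $U_n$ becomes a $\tilde\tau$-clopen set with finitely many tree-branches removed, the resulting $\sigma$-sets automatically satisfy the required $=^*$ identities on every $I(q,f)$ simultaneously, while the almost-disjointness of the $I(p_\bullet,f)$ keeps the recursion consistent. With this in hand, condition (3) follows: for $I\in\mathcal I$ that $\tilde\tau$-converges to some $q\in U_n$, (1) gives $I\subset^* I(q,f)$, the hypothesis gives $I\cap B$ finite, and when $n=0$ also $I\cap A^{(1)}$ is finite---an infinite subset of $I$ in $A^{(1)}$ would $\tilde\tau$-converge to $q$, forcing $q\in(A^{(1)})^{(1)}=A^{(1)}$ by the $|\tilde\tau|<\mathfrak b$ idempotence, contradicting $q\in U_0$---so combining with (6)/(7) yields $I\subset^* U_n$, completing $\sigma$-convergence to $q$.
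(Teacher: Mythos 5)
Your setup (the elementary submodel, the dominating $f$, the proof of (1), and the almost-disjointness of the family $\{I(p,f):p\in\mathbb Q\}$) matches the paper, and your derivation of (3) from (1), (6), (7) and the idempotence $(A^{(1)})^{(1)}=A^{(1)}$ is also the right argument. But the heart of the lemma is the construction of the partition $\{U_n : n\in\omega\}$ satisfying (6) at \emph{every} $q\in U_n$ simultaneously with (5) and (7), and there your proposal has a genuine gap. You set $U_0=\{r\}\cup(\mathbb Q\setminus(B\cup A^{(1)}))$ and then let the cells $U_{k+1}$ ``claim'' cofinite pieces of $I(p_k,f)\setminus B$; as you note yourself, this both destroys the identity (7) you computed for the original $U_0$ and leaves (6) unverified at every non-principal point of $U_{k+1}$ (a point $q$ swept into $U_{k+1}$ merely because it lies in $I(p_k,f)$ has no reason to satisfy $U_{k+1}\cap I(q,f)=^*I(q,f)\setminus B$). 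Your proposed repair --- invoking Proposition~\ref{ringRule} with a tree-coding --- does not close this gap: that proposition is a statement about when closure points survive a refinement whose traces on $I(q,f)$ are \emph{already known} to have a prescribed form; it is not a device for constructing a partition with those traces, and it says nothing about (6) or (7). (In the paper it motivates inductive hypotheses in the later theorems; it plays no role in the proof of this lemma.)

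The paper's construction differs exactly where yours stalls. For \emph{every} $q\in\mathbb Q$ one first defines a trimmed set $I_q$ (namely $I(q,f)\setminus B$ if $q\in A^{(1)}$, and $I(q,f)\setminus(B\cup A^{(1)})$ otherwise), observes that $\{I_q:q\in\mathbb Q\}$ is an almost disjoint family which may be taken pairwise disjoint after finite modifications, and then builds a single function $h:\mathbb Q\to\omega$ by recursion on the enumeration $\{q_k : k\in\omega\}$ of all of $\mathbb Q$: at stage $k$ the point $q_k$ receives a value (never $0$ if $q_k\in A^{(1)}$) and the so-far-unassigned --- hence cofinite, by the pairwise disjointness --- part of $I_{q_k}$ receives the same value. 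Setting $U_n=h^{-1}(n)$, every point is ``principal'' for its own $I_q$, which is what yields (6) and (7) at every point at once; $B$ is distributed injectively over the cells at stage $0$ to get (4), and $A^{(1)}$ never enters cell $0$, giving (5). If you rework your recursion so that it processes all of $\mathbb Q$ against the pairwise-disjoint trimmed family $\{I_q\}$, rather than only the points of $B\cup A^{(1)}$ against the untrimmed sets $I(p_k,f)$, your argument becomes the paper's.
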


   \begin{proof}
   Choose, as in Proposition \ref{getModels},
   an elementary submodel $M$ of $H(\mathfrak c^+)$ of cardinality
    less than $\mathfrak b$ such that $\{\tau_0,\tilde \tau \}\in M$
    and $\tilde \tau$ and $\mathcal I$ are subsets of $M$. 
    Let $f$ be any element of $\omega^\omega$ that mod finite  dominates
     $M\cap \omega^\omega$. 

     \begin{claim} 
     $I\subset^* I(q,f)$ 
     for each $q\in \mathbb Q$ and $I\in\mathcal I$ that
     converges to $q$. 
     \end{claim}

\bgroup
\def\proofname{Proof of Claim.}

\begin{proof}  Let $I\subset \mathbb Q$ be a sequence that converges
to $q$. For each $n\in\omega$, $I\cap U(q,n)\setminus U(q,n+1)$ is finite. 
Therefore, for each $I\in \mathcal I$ that converges to $q$,
 there is a $g_I\in \omega^\omega\cap M$ satisfying that
  $I\subset I(q,g_I)$. Note that it is immediate
  that $I(q,g_I)\subset^* I(q,f)$. 
\end{proof}

For each $q\in A^{(1)}$, let $I_q = I(q,f)\setminus B$,
 and for each $q\notin A^{(1)}$, let $I_q = I(q,f)\setminus
  (B\cup A^{(1)})$.  Notice that $I \subset^* I_q$
  for each $q\in \mathbb Q$ and $I\in \mathcal I$ that
   converges to $q$. 

The family $\{ I_q : q\in \mathbb Q\}$ is an almost disjoint
family, and so, by a finite modification of each,  we
 may assume that the 
 elements of   $\{ I_q : q\in \mathbb Q\}$ are
pairwise disjoint.

     Now we are ready to construct our 
     partition $\{ U_n : n\in\omega\} = 
     \sigma\setminus \tilde \tau$. To do so
     we will recursively define a function $h: \mathbb Q
     \rightarrow \omega$, and will set $U_n = h^{-1}(n)$
     for each $n\in\omega$. The recursion will ensure
     that for each $q\in \mathbb Q$, $I_q$ is almost
     contained in $h^{-1}(n)$ if $h(q)=n$. 
Let us choose any $h_0 : \{r\}\cup B \rightarrow \omega$
that is 1-to-1 with $h_0(r) = 0$. We   
define $h_k$ ($k\in\omega$)
by recursion on $k$ with these three inductive hypotheses
for all $\ell<k$:
\begin{enumerate}
\item $h_\ell\subset h_k$ for all $\ell<k$ and
  the domain of $h_k$ is equal to 
  $\{r\}\cup B \cup \{ q_\ell : \ell<k\}\cup 
    \bigcup\{  I_{q_\ell} : \ell < k\}
    $,
    \item    
     $I_{q_\ell} $ is mod finite contained
     in $h_k^{-1}(h_k(q_\ell))$, and
      \item   if $q_\ell\in A^{(1)}$, $h_k(q_\ell)>0$.
      \end{enumerate}

     Suppose that, by induction, we  have defined
      $h_k$.  We first consider $q_k$. 
    Observe that $I_q\cap \operatorname{dom}(h_k)$ is finite
    for all $q\notin \{r\}\cup \{ q_\ell : \ell < k\}$. 
      If $q_k$ is
      not in the domain of $h_k$, we extend $h_k$ by
      defining $h_{k+1}(q_k) = 1$
      and define $h_{k+1}(q) = 1$ for all $q\in I_{q_k}\setminus
       \operatorname{dom}(h_k)$.
       Otherwise let $n=h_k(q_k)$. If $n>0$,
        then define the extension
        $h_{k+1}(q) = n$ for all $q\in I_{q_k}\setminus
        \operatorname{dom}(h_k)$. It is immediate
        that in these first two cases each of the three inductive
        hypotheses hold for the definition of $h_{k+1}$.
The final case is that $h_k(q_k) = 0$ and we note that
this implies that $q_k\notin A^{(1)}$ and that 
 $I_{q_k}$ is also disjoint from $A^{(1)}$. Therefore the induction
 hypotheses are again preserved if we extend $h_k$ 
 by setting $h_{k+1}(q) = 0$
 for all $q\in I_{q_k}\setminus \operatorname{dom}(h_k)$.

\egroup
     
    Naturally we set $h=\bigcup_k h_k$ and consider
     $U_n = h^{-1}(n)$ for any $0<n\in\omega$. Choose
     any $q\in U_n$ and minimal $k\in\omega$ so that
      $q\in \operatorname{dom}(h_{k+1})$.    
Since $I_q\setminus h_{k+1}^{-1}(n)$ is finite,
 it follows that $I\subset^* I_q \subset^* U_n$ for
 all $I\in\mathcal I$ that converges to $q$. For
 any integer $n>0$, $U_n \cap I(q,f) $ mod finite
 contains $I_q$ and, since $I_q$ mod finite equals
  $I(q,f)\setminus B$, $U_n\cap I(q,f)=^* I(q,f)\setminus B$.
  Similarly, it follows that   $U_0\cap I(q,f) =^* I(q,f)
  \setminus (B\cup A^{(1)}$ for all $q\in U_0$. 

For each $n\in\omega$, $U_n\cap B$ has at most one element,
 hence $B$ is closed and discrete with respect to $\sigma$.
 Finally, $r\in U_0$ and $U_0$ is disjoint from $A^{(1)}\supset A$.
   \end{proof}

\section{The examples}

For the remainder of the paper fix a partition,
 $\{ Q_t : t\in 2^{<\omega}\}$, of $\mathbb Q$ into dense
 sets. This provides us with our function $\varphi$ from
  $\mathbb Q$ to $2^{<\omega}$ where $\varphi(q) = t$
  if and only if $q\in Q_t$.  Fix also an enumeration
   $\{ f_\alpha : \alpha < \mathfrak b\}$
    of a  mod finite unbounded family of strictly
    increasing functions from $\omega^\omega$. 
We begin our discussion with our simple plan for
ensuring that $\tau_{\mathfrak b}$ is not H-separable.

\begin{lemma}  Assume that $\tau_{\mathfrak b}\supset \tau_0$
is a clopen basis\label{notH} for a topology on $\mathbb Q$ that satisfies
\begin{enumerate}
\item  for every $t\in 2^{<\omega}$, $Q_t$ is a $\tau_{\mathfrak b}$-dense
 set,
 \item for every $\alpha\in \mathfrak b$, there is a $\rho_\alpha\in 2^\omega$
 such that $B_\alpha = \bigcup\{
  Q_{\rho\restriction n}\cap \{q_\ell : \ell < f_\alpha(n) \}\}$ 
  is closed and discrete,
\end{enumerate}
then $(\mathbb Q, \tau_{\mathfrak b})$ is not H-separable.
\end{lemma}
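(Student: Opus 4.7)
The plan is to exhibit a single sequence of dense sets whose every finite selection admits an infinite subfamily forced into some $B_\alpha$, hence into a nowhere dense closed set. The whole argument rests on indexing the dense sets by $2^{<\omega}$ so that each $F_n$ lies entirely inside a single piece $Q_t$, which is what lets the tree-structure of $B_\alpha$ bite.

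Concretely, I would fix a bijection $n \mapsto t_n$ from $\omega$ onto $2^{<\omega}$ and set $D_n = Q_{t_n}$, which is $\tau_{\mathfrak b}$-dense by condition (1). Given any finite selection $F_n \subset D_n$, define $g \in \omega^\omega$ by letting $g(m)$ be one more than the largest $\ell$ with $q_\ell \in F_n$ for some $n$ with $|t_n| = m$ (and $g(m)=0$ if no such $q_\ell$ exists). Only $2^m$ indices $n$ lie at level $m$ and each $F_n$ is finite, so $g$ is well-defined; moreover, for every $n$ with $|t_n| = m$ we have $F_n \subset Q_{t_n} \cap \{q_\ell : \ell < g(m)\}$.

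Because $\{f_\alpha : \alpha < \mathfrak b\}$ is mod finite unbounded, pick $\alpha < \mathfrak b$ with $M = \{m : g(m) < f_\alpha(m)\}$ infinite. For each $m \in M$, let $n(m)$ be the unique index with $t_{n(m)} = \rho_\alpha \restriction m$. Then
$$F_{n(m)} \subset Q_{\rho_\alpha \restriction m} \cap \{q_\ell : \ell < g(m)\} \subset Q_{\rho_\alpha \restriction m} \cap \{q_\ell : \ell < f_\alpha(m)\} \subset B_\alpha.$$
The set $A = \{n(m) : m \in M\}$ is infinite since distinct $m$ yield strings of distinct length and thus distinct $n(m)$, so $\bigcup_{n \in A} F_n \subset B_\alpha$. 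By condition (2), $B_\alpha$ is closed and discrete; crowdedness of $\tau_{\mathfrak b}$ forces $B_\alpha \neq \mathbb Q$, so $\mathbb Q \setminus B_\alpha$ is a nonempty $\tau_{\mathfrak b}$-open set disjoint from $\bigcup_{n \in A} F_n$. Hence the selection fails to witness H-separability, and since the selection was arbitrary, $(\mathbb Q, \tau_{\mathfrak b})$ is not H-separable.

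There is no genuine obstacle once the indexing is chosen: the $2^{<\omega}$-enumeration is exactly what is needed so that a single $g \in \omega^\omega$ captures all the level-$m$ selections at once, and mod-finite unboundedness then delivers an $\alpha$ for which the branch $\rho_\alpha$ and the dominating $f_\alpha$ cooperate to pull an infinite subfamily of the selection into $B_\alpha$.
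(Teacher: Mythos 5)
Your proposal is correct and is essentially the paper's own argument: the paper likewise uses the sequence $\{Q_t : t\in 2^{<\omega}\}$ itself as the test family, bounds each level-$n$ selection by a function $h\in\omega^\omega$, picks $\alpha$ with $\{n : h(n)<f_\alpha(n)\}$ infinite, and observes that the selections indexed by $\{\rho_\alpha\restriction n : n\in L\}$ land inside the closed discrete set $B_\alpha$. The only cosmetic difference is your explicit bijection between $\omega$ and $2^{<\omega}$ and the remark that crowdedness (which follows from the $Q_t$ being pairwise disjoint and dense) makes $B_\alpha$ non-dense.
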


\begin{proof}  By the assumption on $\tau_{\mathfrak b}$,
 the family $\{ Q_t : t\in 2^{<\omega}\}$ is a sequence 
 of dense subsets of $(\mathbb Q, \tau_{\mathfrak b})$. 
 We show that $(\mathbb Q, \tau_{\mathfrak b})$
 is not H-separable by considering an arbitrary
selection
 $\{ H_t : t\in 2^{<\omega}\}$  of finite
 sets with $H_t\subset Q_t$ for all $t\in 2^{<\omega}$,
 and proving there is an infinite set $A\subset 2^{<\omega}$
 satisfying that $\bigcup\{ H_t : t\in A\}$ is not dense. 
  
  Define a function $h\in \omega^\omega$ by the formula
 that, for each $n\in\omega$, 
   $\bigcup\{ H_t : t\in 2^n\}$ is a subset of
    $\{ q_\ell : \ell < h(n)\}$. 
    Choose any $\alpha$ so that 
      $L = \{ n : h(n) < f_\alpha(n)\}$ is infinite,
      and let $A  = \{ \rho_\alpha\restriction n : n\in L\}$.
   It follows that the set $  
    \bigcup \{ H_t : t\in A \}$
    is a subset of $B_\alpha$ and is therefore
    is a proper $\tau_{\mathfrak b}$-closed subset of $\mathbb Q$.
    \end{proof}

 Now, for this first construction,
  fix an enumeration $\{ (r_\alpha, A_\alpha) : \alpha < \mathfrak c\}$
 of $\mathbb Q\times [\mathbb Q]^{\aleph_0}$.   
    This   construction
  is the easiest because we can take $\mathfrak c$ many steps. 

\begin{theorem}
If   $\mathfrak b =  \mathfrak c$, 
 there is a countable Hausdorff 0-dimensional  Fr\'echet-Urysohn space that is
 not H-separable.
\end{theorem}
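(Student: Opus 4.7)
My plan is to construct $\tau_{\mathfrak{c}} = \bigcup_{\alpha<\mathfrak{c}}\tau_\alpha$ by recursion of length $\mathfrak{c} = \mathfrak{b}$, applying Lemma \ref{mainLemma} at each successor stage. Alongside the chain I would maintain an increasing family $\mathcal{I}_\alpha$ of $\tau_\alpha$-converging sequences with $|\mathcal{I}_\alpha|<\mathfrak{b}$, to be preserved as converging in all later topologies, together with choices $\rho_\alpha\in 2^\omega$ and the corresponding sets $B_\alpha$ as in Lemma \ref{notH}. The recursion is arranged so that at the end the hypotheses of Lemma \ref{notH} yield non-H-separability while the sequences locked into $\mathcal{I}$ witness Fr\'echet-Urysohn.

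At stage $\alpha$ I first handle the pair $(r_\alpha, A_\alpha)$. If $r_\alpha\in A_\alpha$ or $r_\alpha$ lies outside the $\tau_\alpha$-closure of $A_\alpha$, no action is needed for this pair. If $r_\alpha$ is the $\tau_\alpha$-limit of some sequence in $A_\alpha$, fix one such sequence $J_\alpha\subset A_\alpha$ (chosen carefully for the almost-disjointness requirement below), add $J_\alpha$ to $\mathcal{I}_\alpha$, and feed the lemma the trivial pair $(A,r)=(\emptyset, r_\alpha)$. Otherwise feed the lemma $(A,r)=(A_\alpha, r_\alpha)$, so that its output $U_0$ separates $r_\alpha$ from $A_\alpha$ in $\tau_{\alpha+1}$. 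Next, choose $\rho_\alpha\in 2^\omega$ so that $B_\alpha$ is almost disjoint from every $I\in \mathcal{I}_\alpha$. Finally, invoke Lemma \ref{mainLemma} with $\tilde\tau=\tau_\alpha$, $\mathcal{I}=\mathcal{I}_\alpha$, $\tilde f=f_\alpha$, $B=B_\alpha$, and the $(A,r)$ above, and set $\tau_{\alpha+1}=\sigma$. At limits take unions.

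The end-of-construction verifications are routine. Density of each $Q_t$ is preserved inductively because each new basic open $U_n$ almost contains $I(q,f)\cap Q_t$, which is infinite since $f$ dominates the density-witnessing functions in the submodel. Each $B_\alpha$ remains closed and discrete in $\tau_{\mathfrak{c}}$ because refinement preserves closed-and-discrete sets. Lemma \ref{notH} then gives non-H-separability. For Fr\'echet-Urysohn: if $r$ lies in the $\tau_{\mathfrak{c}}$-closure of $A$ with $r\notin A$, then $r$ is in the $\tau_\alpha$-closure of $A$ at the stage where $(r,A)=(r_\alpha, A_\alpha)$, which rules out the separation case (otherwise $U_0$ would witness $r$ outside the $\tau_{\mathfrak{c}}$-closure of $A$), so some $J_\alpha\subset A$ was added to $\mathcal{I}$; clause (3) of Lemma \ref{mainLemma} applied inductively at every subsequent stage then gives that $J_\alpha$ converges to $r$ in each $\tau_\beta$ with $\beta\geq\alpha$, hence in $\tau_{\mathfrak{c}}$.

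The principal obstacle is the choice of $\rho_\alpha$: the set of $\rho\in 2^\omega$ for which the associated $B$ fails to be almost disjoint from a given countable $I$ can in general have size up to $\mathfrak{c}$, so the naive counting argument --- even with $|\mathcal{I}_\alpha|<\mathfrak{c}$ --- does not immediately yield a safe $\rho_\alpha$. I expect the resolution is to restrict the subsequences $J_\alpha$ chosen in the first step to have a manageable tree structure (for instance, by taking $J_\alpha$ inside a single $Q_t$ whenever possible, and otherwise exploiting that $f_\alpha$ is fixed before $\rho_\alpha$ so that only indices below $f_\alpha(n)$ threaten almost-disjointness), so that the bad set of $\rho$'s per sequence has size less than $\mathfrak{c}$ and can be collectively avoided.
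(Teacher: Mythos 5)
Your architecture is the paper's: a length-$\mathfrak c$ recursion applying Lemma \ref{mainLemma} at successor stages, handling the pairs $(r_\alpha,A_\alpha)$ exactly as you describe (add a converging sequence to $\mathcal I$ when one exists, otherwise separate $r_\alpha$ from $A_\alpha$ via $U_0$), finishing with Lemma \ref{notH} for non-H-separability and with preservation of the sequences in $\mathcal I$ for the Fr\'echet-Urysohn property. The limit stages and the end-of-construction verifications all match the paper's proof.

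The gap is the one you flag yourself: the choice of $\rho_\alpha$. Your worry is justified --- if $\varphi(J)$ is an arbitrary infinite subset of $2^{<\omega}$, the set of branches $\rho$ that pass through infinitely many nodes of $\varphi(J)$ (and hence for which the associated $B$ can meet $J$ infinitely) may have cardinality $\mathfrak c$, so no counting argument over fewer than $\mathfrak c$ sequences can succeed without further care. But neither of your proposed fixes works: a sequence witnessing $r_\alpha\in\overline{A_\alpha}$ need not be refinable into a single $Q_t$, since $A_\alpha$ is arbitrary and may meet each $Q_t$ finitely; and the observation that only indices below $f_\alpha(n)$ threaten almost-disjointness does not shrink the set of dangerous branches, which is governed by where $\varphi(J)$ sits in the tree, not by how large the indices are. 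The paper's resolution is a Ramsey-type refinement carried as an inductive hypothesis: every infinite subset of $2^{<\omega}$ contains an infinite subset that is either a chain or an antichain, so one passes from $J_\beta$ to an infinite $I_\beta\subset J_\beta$ with $\varphi(I_\beta)$ a chain or an antichain before adding it to $\mathcal I$. An antichain meets any branch $\{\rho\restriction n : n\in\omega\}$ in at most one node, and an infinite chain is contained in a unique branch; hence each $I\in\mathcal I_\alpha$ forbids at most one $\rho\in 2^\omega$, and since $|\mathcal I_\alpha|<\mathfrak c=\mathfrak b$ a safe $\rho_\alpha$ remains. (The sequences placed in $\mathcal I_0$ to preserve density of the $Q_t$ are each contained in a single $Q_t$, so they forbid nothing.) Without this refinement step, or an equivalent device, the recursion cannot continue past the choice of $\rho_\alpha$.
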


\begin{proof}
We recursively construct an increasing family $\{ \tau_\alpha : \alpha < \mathfrak c\}$
of clopen bases for topologies on $\mathbb Q$. We additionally construct
an increasing family $\{ \mathcal I_\alpha : \alpha < \mathfrak c\}$ 
of families of $\tau_0$-converging infinite
sequences from $\mathbb Q$ and a family
 $\{ \rho_\alpha : \alpha < \mathfrak c\}
 \subset 2^\omega$. The (additional) inductive assumptions,
  for $\beta+1,\gamma <\alpha < \mathfrak c$
are:
\begin{enumerate}
\item for each $t\in 2^{<\omega}$, $Q_t$ is $\tau_{\beta}$-dense,
\item   $\tau_\gamma$ and $\mathcal I_\gamma$ have cardinality
less than $\mathfrak c$,
\item each $I\in \mathcal I_\gamma$ is\label{sequences} a $\tau_\gamma$-converging sequence,
\item for each $I\in \mathcal I_\gamma$, $\varphi(I)\subset 2^{<\omega}$ is
either a chain or an anti-chain, 
\item if $r_\beta$ is in\label{frechet} the $\tau_{\beta+1}$-closure
of $A_\beta$, then there is an $I\in \mathcal I_{\beta+1}$ that converges
to $r_\beta$ and is contained in $A_\beta$,
\item the set $\{ \rho_\beta\restriction n : n\in\omega\}$ is almost
disjoint from $\varphi(I)$ for every $I\in \mathcal I_\beta$,
\item the set $B_\beta = \bigcup\{ Q_t \cap \{ q_\ell : \ell < f_\beta(|t|) \}: 
  t\in \{ \rho_\alpha\restriction n : n\in \omega\}\}$
is closed and discrete with respect to 
$\tau_{\beta+1}$.
\end{enumerate}

We start the recursion with $\tau_0$ as above and letting
$\mathcal I_0$ be a countable family of $\tau_0$-converging sequences
satisfying that, for every $I\in \mathcal I_0$, there is a $t\in 2^{<\omega}$
such that $I\subset Q_t$, and for every $q\in \mathbb Q$ and $t\in 2^{<\omega}$,
 there is an $I\in \mathcal I$ that converges to $q$ and is contained
 in $Q_t$. Let us note that so long as the members of
 $\mathcal I_0$ remain converging, this ensures that $Q_t$ is dense
 for every $t\in 2^{<\omega}$.

 Now let $\alpha < \mathfrak b$ and assume that,
 for every $\beta+1, \gamma < \alpha$,
 $\tau_{\gamma}, \rho_\beta$
 and $\mathcal I_\gamma$ have been chosen so 
 that the inductive hypotheses are satisfied.

In case $\alpha$ is a limit ordinal, then we set
 $\tau_\alpha = \bigcup \{ \tau_\gamma : \gamma < \alpha\}$
 and $\mathcal I_\alpha = \bigcup\{ \mathcal I_\gamma :\gamma < \alpha\}$.
 The easy verification of the inductive conditions is omitted. 
 \medskip

 Now suppose that $\alpha = \beta+1$. We first consider 
 the pair $(r_\beta, A_\beta)$. If $r_\beta$ is an element
 of $A_\beta^{(1)}$ with respect to the topology $\tau_\beta$,
  then choose any $J_\beta\subset A_\beta$ that converges
  to $r_\beta$. By passing to an infinite subset $I_\beta\subset
   J_\beta$, we can ensure that $\varphi(I_\beta)$ is either
   a chain (including singleton chains) or an anti-chain in $2^{<\omega}$. 
In this case we set $\mathcal I_{\alpha}=\{I_\beta\}\cup\mathcal I_\beta$,
and set, for later reference, $A$ to be the empty set. 
In the case that there is no such sequence from $A_\beta$ converging
to $r_\beta$, set $\mathcal I_\alpha=\mathcal I_\beta$ and 
set $A$ equal to $A_\beta$. 

Choose any function $\tilde f\in \omega^\omega$ such that
 $f_\alpha \leq \tilde f$  and such that $I\subset I(q,\tilde f)$
 for every $I\in \mathcal I_\alpha$ that converges to $q\in \mathbb Q$.

Now choose any $\rho_\beta\in 2^{<\omega}$ satisfying that
   $\varphi(I)\cap \{ \rho_\beta\restriction n : n\in\omega\}$
   is finite for all $I\in \mathcal I_\alpha$, and
   set $B_\beta  
 = \bigcup\{ Q_t \cap \{ q_\ell : \ell < f_\beta(|t|) \}: 
  t\in \{ \rho_\alpha\restriction n : n\in \omega\}\}$.

  Define $\tau_\alpha $ to be $\tau_\beta\cup \{ U^\alpha_n : n\in\omega\}$
   where $\{ U^\alpha_n : n\in\omega\}$ is the sequence
    $\{ U_n : n\in\omega\}$ obtained by applying
    Lemma \ref{mainLemma} with assignments
     $\tilde \tau =\tau_\beta$, $\mathcal I = \mathcal I_\alpha$,
     $\tilde f = f_\beta$,
     $B= B_\beta$,  $r=r_\beta$
     and $A$ as described above.

    Lemma \ref{mainLemma} ensures every element
    of $\mathcal I_\alpha$ is a $\tau_\alpha$-converging
    sequence
    and that $B_\beta$ is $\tau_\alpha$-closed and discrete. 
    Similarly, if $r_\beta$ is not in $A_\beta^{(1)}$ as 
    determined by $\tau_\beta$, then $A=A_\beta$ and
     $U^\alpha_0$ is a neighborhood of $r_\beta$ that
     is disjoint from $A_\beta$. On the other hand,
     if $r_\beta$ is in $A_\beta^{(1)}$, then
      we added $I_\beta$ to $\mathcal I_\alpha$.
This completes the induction construction of 
$\tau_{\mathfrak c} = \bigcup\{ \tau_\alpha : \alpha < \mathfrak c\}$.

  It follows by Lemma \ref{notH} that $\tau_{\mathfrak c}$ is not
  H-separable. It follows from inductive condition \ref{sequences}
  and the fact that $\{\mathcal I_\alpha : \alpha\in\mathfrak c\}$
  and $\{\tau_\alpha :\alpha \in\mathfrak c\}$ are chains, 
  that every member of $\bigcup\{\mathcal I_\alpha : \alpha < \mathfrak c\}$
  is a $\tau_{\mathfrak c}$-converging sequence.  
  It then also follows from inductive condition \ref{frechet},
   and the fact that $\{ (r_\beta,A_\beta) : \beta\in \mathfrak c\}$
   enumerates $\mathbb Q\times [\mathbb Q]^{\aleph_0}$,
    that $(\mathbb Q, \tau_{\mathfrak c})$ is Fr\'echet-Urysohn.
\end{proof}

To prove this next result we perform the same
length $\mathfrak b$ recursive construction but with no attention
paid to the Fr\'echet-Urysohn property.

\begin{theorem}
There is a countable space that is mH-separable\label{justwH}
but not H-separable.
\end{theorem}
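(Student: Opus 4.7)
The plan is to carry out a length-$\mathfrak b$ streamlined variant of the recursion from the previous theorem, dropping all Fr\'echet--Urysohn bookkeeping. I retain inductive conditions (1)--(4), (6), and (7) with the cardinality bound in (2) relaxed to $|\alpha + \omega| < \mathfrak b$, discard condition (5), and hold $\mathcal I_\beta$ fixed at the initial $\mathcal I_0$ throughout; condition (6) is automatic because $\varphi(I)$ is a singleton for each $I \in \mathcal I_0$. At each successor stage $\beta+1$ I pick $\rho_\beta \in 2^\omega$, set $B_\beta$ exactly as before, and invoke Lemma~\ref{mainLemma} with $\tilde\tau = \tau_\beta$, $\mathcal I = \mathcal I_0$, $B = B_\beta$, $\tilde f = f_\beta$, and $A = \emptyset$; limits are taken by union. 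Non-H-separability of $(\mathbb Q, \tau_{\mathfrak b})$ is then immediate from Lemma~\ref{notH}.

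For mH-separability I pass through the local formulation: given $q \in \mathbb Q$ and a descending $\tau_{\mathfrak b}$-dense sequence $\{D_n\}$, I must produce a finite $H^q_n \subset D_n$ with $W \cap H^q_n \neq \emptyset$ for all but finitely many $n$ whenever $q \in W \in \tau_{\mathfrak b}$; once such local witnesses are in hand, $F_n := \bigcup_{k \leq n} H^{q_k}_n$ over an enumeration $\{q_k\}$ of $\mathbb Q$ yields the global selector. Via Proposition~\ref{getModels}, choose an elementary submodel $M \prec H(\mathfrak c^+)$ of cardinality less than $\mathfrak b$ containing $\tau_0$, $\mathcal I_0$, $\{D_n\}$, and the earlier recursion data $\{\tau_\gamma, \rho_\gamma, f_\gamma : \gamma < \alpha\}$, where $\alpha = M \cap \mathfrak b \in \mathfrak b$. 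Then $\tau_\alpha \subseteq M$ has cardinality less than $\mathfrak b$ and each $D_n$ is $\tau_\alpha$-dense, so the proposition preceding Proposition~\ref{ringRule} supplies an $f \in \omega^\omega$ dominating $M \cap \omega^\omega$ and the candidate finite sets $H^q_n = I(q,f) \cap D_n \cap (U(q,n) \setminus U(q,f(n)))$, which meet every $\tau_\alpha$-neighborhood of $q$ eventually.

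The main obstacle will be lifting this eventual-meet property from $\tau_\alpha$-neighborhoods to the full $\tau_{\mathfrak b}$-neighborhood system of $q$. A basic $\tau_{\mathfrak b}$-neighborhood of $q$ has the form $V \cap \bigcap_{i<\ell} U^{\gamma_i+1}_{n_i}$ with $V \in \tau_\alpha$ and $\gamma_i \geq \alpha$, and Lemma~\ref{mainLemma}(6) writes each extra factor as $U^{\gamma_i+1}_{n_i} \cap I(q, f^*_{\gamma_i}) =^* I(q, f^*_{\gamma_i}) \setminus B_{\gamma_i}$ with $B_{\gamma_i} \subseteq \varphi^{-1}(\{\rho_{\gamma_i}\restriction m : m \in \omega\})$. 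This places $W$ squarely in the setup of Proposition~\ref{ringRule} relative to $\tau_\alpha$: within $I(q,f)$, the set $W$ differs from a $\tau_\alpha$-neighborhood only by a finite family of $\varphi$-branches $\{\rho_{\gamma_i}\}$. The subtle point is that Proposition~\ref{ringRule} directly yields only accumulation of the tail $\bigcup_{n\geq N} H^q_n$ at $q$, while mH-separability demands the stronger cofinite-meeting condition; I must arrange $H^q_n$ so that its $\varphi$-values at annulus $n$ remain spread across enough of $2^{<\omega}$ to evade any finite family of forbidden branches. The descending chain $\{D_n\}$, together with the density of each $Q_t$, prevents the tree of $\varphi$-values appearing in $\bigcup H^q_n$ from collapsing onto finitely many branches, so a diagonalization within each annulus---using that at most finitely many branches $\rho_{\gamma_i}$ ever appear in a single basic $W$---produces the required local witnesses.
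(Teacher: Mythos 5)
Your architecture matches the paper's: a length-$\mathfrak b$ recursion, Lemma \ref{notH} for non-H-separability, localization at a point $q$, and the correct identification of the obstacle as lifting the eventual-meeting property from $\tau_\alpha$-neighborhoods to $\tau_{\mathfrak b}$-neighborhoods, where a later neighborhood differs on $I(q,f)$ from an earlier one by finitely many $\varphi$-branches. But the final paragraph, which is where the theorem actually gets proved, rests on a false claim and omits a case the paper must treat separately. It is not true that the descending chain $\{D_n\}$ together with the density of each $Q_t$ ``prevents the tree of $\varphi$-values appearing in $\bigcup H^q_n$ from collapsing onto finitely many branches'': take $D_n=Q_{t_0}$ for all $n$, or $D_n=\bigcup\{Q_{\rho\restriction m}:m\geq n\}$ for a fixed $\rho\in 2^\omega$; these are legitimate descending sequences of dense sets whose $\varphi$-values lie on a single branch, so no ``diagonalization within each annulus'' can produce selectors whose $\varphi$-values form large antichains. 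The paper therefore splits into two cases. If $q$ is in the closure of $D\cap\bigcup\{Q_t:t\subset\rho\}$ for some $\rho\in 2^\omega$, elementarity reflects such a $\rho$ into an early model $M^q_{\zeta_k}$, whence $\rho\neq\rho_\beta$ for all $\beta\geq\zeta_k$, and each later $B_\beta$ (a union of finite pieces of the sets $Q_{\rho_\beta\restriction n}$) meets $\bigcup\{Q_t:t\subset\rho\}$ in a finite set, so selectors taken inside that branch survive into every $\tau_{\mathfrak b}$-neighborhood. Only in the complementary case, where every neighborhood trace of $D$ at $q$ realizes arbitrarily large finite antichains of $\varphi$-values, does the antichain-selection argument you gesture at apply (choosing $H(U,k,n)$ with $\varphi$-image an antichain of size $n$, which cannot be covered by the $|S|<n$ branches excluded by a given $W$). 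Your proposal needs both halves.

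There is a second, more technical gap: the trace property you invoke --- that $W\cap I(q,f)$ equals an earlier neighborhood minus finitely many $B_\xi$'s --- is not automatic from clause (6) of Lemma \ref{mainLemma} applied to basic open sets. That clause controls $U^{\gamma}_n\cap I(q,f_\gamma)$ for the function $f_\gamma$ used at stage $\gamma$, not $U^{\gamma}_n\cap I(q,f)$ for your $f$, which is chosen outside the construction and need not be dominated by the later stage functions; moreover the mod-finite errors and the $B$'s contributed by the various stages must be shown to accumulate only finitely across an arbitrary finite intersection. The paper makes this an explicit inductive hypothesis (item \ref{wH1} of that proof) preserved through the recursion, and anchors the selector to $I(q,\hat f_\zeta)$ for a construction-stage function $\hat f_\zeta$ at a reflecting ordinal $\zeta$ obtained from a club of fixed points of a second elementary chain $\{M^q_\alpha\}$. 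Without that hypothesis, or something equivalent, the lifting step has no support.
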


\begin{proof}
In this proof we recursively  construct
our  increasing family $\{ \tau_\alpha : \alpha < \mathfrak b\}$
of clopen bases for topologies on $\mathbb Q$ together with 
the special  family
 $\{ \rho_\alpha : \alpha < \mathfrak b\}
 \subset 2^\omega$ 
 by also recursively constructing an increasing chain,
  $\{ M_\alpha : \alpha < \mathfrak b\}$,
 of elementary
 submodels of $H(\mathfrak c^+)$ as per Proposition \ref{getModels}.
 For each $\alpha <\mathfrak b$,
  let $\delta_\alpha$ equal the ordinal
    $M_\alpha\cap \mathfrak b$ and choose a function $f_{\delta_\alpha}\leq 
    \hat f_{\delta_\alpha}\in
    \omega^\omega$ 
      that mod finite dominates $M_\alpha\cap \omega^\omega$.   
     The elementarity will be
    critical for ensuring that each $I(q, \hat f_{\delta_\alpha})$ 
    captures enough information,
    combined with inductive hypotheses motivated by
    Proposition \ref{ringRule},
    to ensure that the final
    topology is mH-separable. We will also insist that $\hat f_{\delta_\alpha}\in M_{\alpha+1}$.

For $0<\beta  < \alpha$ we assume we are preserving
these inductive properties:
   
\begin{enumerate}
\item for each $t\in 2^{<\omega}$, $Q_t$ is $\tau_{\beta}$-dense,
\item $M_\beta$ is an elementary submodel  of $H(\mathfrak c^+)$
satisfying that $|M_\beta|<\mathfrak b$ and $M_\beta\cap \mathfrak b = 
 \delta_\beta\in \mathfrak b$,
 \item each of $ \{Q_t:t\in 2^{<\omega}\}$ and $\{f_\xi: \xi\in\mathfrak b\}$
  are elements of $M_0$,
\item  $\{ M_\xi : \xi< \beta\}$  is an  element  of $M_\beta$,
\item $f_{\delta_\xi} \leq \hat f_{\delta_\xi}
\in M_{\xi+1}\cap\omega^\omega$  mod finite dominates $M_\xi\cap \omega^\omega$,
\item   $\tau_\beta$ is a subset and an element of $M_{\beta}$,
\item $\rho_\beta\in 2^\omega\cap M_\beta \setminus \bigcup\{M_\xi : \xi<\beta\}$,
 \item the\label{H2} set $B_\beta = \bigcup\{ Q_t \cap \{ q_\ell : \ell < f_\beta(|t|) \}: 
  t\in \{ \rho_\beta\restriction n : n\in \omega\}\}$
is closed and discrete with respect to 
$\tau_{\beta}$,
 \item for\label{wH1} each $q\in W\in \tau_\beta$ and $\gamma<\beta$, there is a $
 q\in U\in \tau_\gamma$
 such that $ U\cap I(q,\hat f_{\delta_\gamma}) \setminus W$ is 
 covered by a finite union from $\{ B_\xi : \gamma < \xi \leq \beta\}$.
\end{enumerate}

We start the recursion with $\tau_0$ as above and letting
 $\{ Q_t : t\in 2^{<\omega}\}$ be an element of an
 elementary submodel $M_0$ of $H(\mathfrak c^+)$
 such that $M_0$ has cardinality less than $\mathfrak b$
 and $M_0\cap \mathfrak b = \delta_0 \in \mathfrak b$.
 Let us note that it is immediate that for each
  $q\in \mathbb Q$ and each $t\in 2^{<\omega}$, 
  there is a sequence $I\subset I(q,\hat f_{\delta_0})\cap Q_t$
  that converges to $q$. This fact, combined with 
  inductive hypotheses \ref{wH1} and \ref{H2},
   ensures that each $Q_t$ is $\tau_\beta$ dense for all $\beta < \mathfrak b$.

Here is the inductive step at stage $0<\alpha$. Choose
any elementary submodel $M_\alpha$ of $H(\mathfrak c^+)$
such that  the sets $ \{M_\beta :\beta < \alpha\}$, $\{\hat f_{\delta_\beta} : \beta < \alpha\}$,
and $\{\tau_\beta : \beta <\alpha\}$ are elements and such that $|M_\alpha | < \mathfrak b$
and $\alpha\subset M_\alpha\cap \mathfrak b =\delta_\alpha$. 
By elementarity $\bigcup\{ M_\beta : \beta < \alpha\}$ is
an element of $M_\alpha$ and additionally
the set $2^\omega\cap M_\alpha \setminus
\bigcup\{ M_\beta : \beta < \alpha\}$ is not empty.
Choose any $\rho_\alpha \in  2^\omega\cap M_\alpha \setminus
\bigcup\{ M_\beta : \beta < \alpha\}$.  
Since $\alpha\in M_\alpha$, it follows that $f_\alpha $,
and therefore  $B_\alpha =
\bigcup\{ Q_t \cap \{ q_\ell : \ell < f_\alpha(|t|) \}: 
  t\in \{ \rho_\alpha\restriction n : n\in \omega\}\}$,
  are elements of $M_\alpha$. 
Also choose any $\tilde \delta\in M_\alpha\cap \mathfrak b$
   so that $\delta_\beta < \tilde \delta$ for all 
    $\beta  < \alpha$.

  Define
  $\tau_\alpha$ to be $\bigcup\{\tau_\beta :\beta < \alpha \}\cup
  \{U^\alpha_n : n\in \omega\}$ where $\{U^\alpha_n : n\in\omega\}$
  is obtained by applying Lemma \ref{mainLemma}, working within
   $M_\alpha$, to the
  values $\tilde\tau = \bigcup\{\tau_\beta:\beta < \alpha\}$,
  $\mathcal I = \emptyset$,
   $A=\emptyset$, $r=0$, $\tilde f = f_{\tilde\delta}  $,
    $B = B_\alpha$.  All we needed is that
     $\{ U^\alpha_n : n\in \omega\}\in M_\alpha$ witnesses that
      $B_\alpha$ is closed and discrete and that,
      for each $q\in U^\alpha_n $ and $\beta < \alpha$,
       $ I(q,f_{\delta_\beta})\setminus U^\alpha_n$ is covered by $B_\alpha$. 
This completes the construction, and we set $\tau_{\mathfrak b}$
equal to $\bigcup\{ \tau_\alpha : \alpha < \mathfrak b\}$.

By Lemma \ref{notH},  it suffices to prove that $\tau_{\mathfrak b}$
is mH-separable.  Suppose that $\{ D_n : n\in\omega\}$ is a 
descending sequence of $\tau_{\mathfrak b}$-dense subsets of
 $\mathbb Q$. Clearly, for each $n\in\omega$ and $\beta < \mathfrak b$,
  $D_n$ is also $\tau_{\beta}$-dense. Fix any $q\in \mathbb Q$,
  we prove there is a sequence of finite sets $H^q_n\subset D_n$
  so that every $\tau_{\mathfrak b}$-neighborhood of $q$
  hits all but finitely many members of $\{ H^q_n : n\in\omega\}$. 

  Fix a continuous increasing chain, $\{ M^q_\alpha : \alpha < \mathfrak b\}$,
   of elementary submodels of $H(\mathfrak c^+)$ satisfying that
   $\{D_n :n\in\omega\}$ and 
     $\{ M_\beta, \tilde f_{\delta_\beta}
     : \beta < \mathfrak b\}$ are elements of $ M^q_0$ and for all
      $\alpha<\mathfrak b$, $M^q_\alpha$ has cardinality less
      than $\mathfrak b$ and $\mu_\alpha = M^q_\alpha\cap \mathfrak b$
      is an element of $\mathfrak b$. 
 An increasing chain being continuous simply means that for
limit $\gamma<\mathfrak b$,  
$M^q_\gamma$ is equal to $\bigcup\{ M^q_\alpha :\alpha < \gamma\}$.
This implies that the set $\{ \mu_\alpha : \alpha <\mathfrak b\}$
is a closed and unbounded subset of (the ordinal) $\mathfrak b$.

By standard results, there is a closed and unbounded
subset $C$ of $\mathfrak b$ satisfying that
 $\mu_\zeta  = \zeta$ for all $\zeta\in C$.
Consider any $\zeta\in C$.
    Since $\{M_\beta : \beta < \mathfrak b\}$
is an element of $M^q_\zeta$, it follows that $\{M_\beta : \beta < \zeta\}$
is a subset of $M^q_\zeta$. Moreover, we verify that $M^q_\zeta\cap
 \bigcup \{ M_\beta : \beta < \mathfrak b\} = 
 \bigcup \{ M_\beta : \beta < \zeta\}$.  First, for each 
  $\beta < \zeta$, $M_\beta\in M^q_\zeta$ and, since 
  $M^q_\zeta \models |M_\beta| \in \mathfrak b$, 
  it follows that $M^q_\zeta\models |M_\beta|\subset M^q_\zeta$.
  These two facts combined implies that $M_\beta\subset M^q_\zeta$. 
  On the other hand if $z\in M^q_\zeta\cap \bigcup\{ M_\beta : \beta < 
  \mathfrak b\}$, then $M^q_\zeta \models (\exists \beta <\mathfrak b)~~
  z\in M_\beta$. This implies that $M^q_\zeta\cap 
  \bigcup\{ M_\beta : \beta < \mathfrak b\}
$ is contained in $\bigcup\{ M_\beta : \beta < \zeta\}$.

Now define $D = \bigcup \{ D_n\cap U(q,n)\setminus U(q,n+1) : n\in\omega\}$
and note that $D\in M^q_0$. Since $\{ U(q,n)\setminus U(q,n+1) : n\in \omega\}$
is a partition of $\mathbb Q\setminus \{q\}$ into $\tau_{\mathfrak b}$-clopen
sets and, for each $n$,  $D_n\cap U(q,n)\setminus U(q,n+1)$ is dense
in $U(q,n)\setminus U(q,n+1)$, 
  $D$ is a $\tau_{\mathfrak b}$-dense set.  
 Choose any strictly increasing sequence $\{ \zeta_k :
 k\in \omega\}$ from $C$ and let  $\zeta\in C$ be the supremum
  of $\{\zeta_k : k\in\omega\}$.

Suppose first there is a $\rho\in 2^\omega$ such
that $q$ is in the $\tau_{\mathfrak b}$-closure of $D\cap \bigcup\{Q_t : t\subset \rho\}$.
By elementarity, then we may choose  a $k\in\omega$
and a $\rho\in M^q_{\zeta_k}$  such that $q$ is in the $\tau_{\zeta_k}$-closure
of $D\cap \bigcup\{Q_t : t\subset \rho\}$. 
Since $\tau_{\zeta_k}$ has cardinality less than $\mathfrak b$,  the 
standard proof that shows such spaces are $mH$-separable, shows
that   there is a sequence $\{ F^q_n \subset U(q,n)
: n\in\omega\}\in M^q_{\zeta_{k+1}}$
 of finite subsets of $D\cap \bigcup\{Q_t : t\subset \rho\}$ 
 satisfying that, for each $q\in U\in \tau_{\zeta_k}$,
  $U\cap H^q_n$ is not empty for all but finitely many $n\in\omega$.
  Similarly, there is a  function $g_k\in \omega^\omega\cap M^q_{\zeta_{k+1}}$,
  such that, for each $n\in\omega$, $F^q_n\subset \{ q_\ell : \ell < g_k(n)\}$. 
  Since there are $\xi \in M^q_{\zeta_{k+1}}$ such that 
   $\{ n \in\omega : g_k(n) < f_{\xi }(n)\}$ is infinite, 
   the set $L = \{ n : g_k(n) < \hat f_{\zeta_{k+1}}(n)\}$ is infinite.
   It follows that, for all $n\in L$, $F^q_n \subset I(q,\hat f_{\zeta_{k+1}}(n)\}$.
   Now simply choose a strictly increasing function $g\in\omega^\omega$
   satisfying that, for all $n\in \omega$, there is an $n\leq\bar n\in L$ such
   that $\hat f_{\zeta_{k+1}}(\bar n) \leq g(n)$.  Now set, for every
   $n\in\omega$, $H^q_n = D_n  \cap I(q,\hat f_{\zeta_{k+1}})\cap \{ q_\ell
     : \ell <g(n)\}$. 
     It is apparent that, for all $U\in \tau_{\zeta_k}$, $U\cap H^q_n$ is 
     non-empty for all but finitely many $n\in\omega$. On the other 
     hand,   
since $\rho\neq \rho_\beta$
  for all $\zeta_k \leq \beta < \mathfrak b$, it follows from item \ref{wH1} 
  that $W\cap H^q_n$ is not empty for all but finitely many $n\in\omega$
  and $q\in W\in \tau_{\mathfrak b}$.
\medskip

  Therefore for the remainder of the proof we assume that $q$ is not in
  the $\tau_{\mathfrak b}$-closure of $D\cap 
  \bigcup\{Q_t : t\subset \rho\}$ for each $\rho\in 2^\omega$.  
  Note that it then follows that for each $q\in U\in \tau_{\mathfrak b}$, 
   the set $\{ t : U\cap D \cap Q_t\neq\emptyset \} $ has arbitrarily large
   finite anti-chains.
\medskip

 Fix any $k\in\omega$   and, working in $M^q_{\zeta_k}$, we note
 that   for each
   $q\in U\in \tau_{\zeta_k}$, 
    there is a strictly increasing function $g_U\in \omega^\omega
   \cap M^q_{\zeta_k}$ satisfying that for each $n$
   there are $n\leq m\leq  \ell_1<\ell_2<\cdots<\ell_n < g_U(n)$ such that,
   for each $1\leq i\leq n$
\begin{enumerate}
\item  $q_{\ell_i} \in U\cap  D \cap U(q,n)\setminus U(q,g_U(n))$, and
\item   $\{ t\in 2^{<\omega} : \{q_{\ell_i} : 1\leq i\leq n\}\cap Q_t\neq\emptyset\}$
is an anti-chain of cardinality $n$. 
\end{enumerate}  
For each $k,n\in\omega$ and $U\in \tau_{\zeta_k}$, let $H(U,k,n)$ 
denote the set $\{ q_{\ell_1},\ldots, q_{\ell_n}\}$ chosen as in
the definition of $g_U$. Observe that $H(U,k,n)\subset I(q, g_U)$.

Choose any $g_k\in \omega^\omega\cap M^q_{\zeta_{k+1}}$ satisfying
that $g_U <^* g_k $ for all $U\in \tau_{\zeta_k}$. Let $L_k  =\{n: 
  g_k(n) < \hat f_{\zeta}(n)\}$ and note as above that $L_k$ is infinite. 
  Consider any $n\in\omega$ such that $g_U(n) \leq g_k(n) \leq \hat f_{\zeta}(n)$
  and again note that $H(U,k,n)\subset D_n\cap I(q,\hat f_\zeta)$.   
  
  Finally choose any $g\in \omega^\omega$ so large that, for each $k\in\omega$,
  for all but finitely many $n\in\omega$, there is a $n\leq \bar n \in L_k$
  such that $\hat f_\zeta (\bar n) \leq g(n)$. 
  Set, for all $n\in\omega$,  $H^q_n = I(q, \hat f_\zeta)\cap D_n \cap \{q_\ell : \ell < g(n)\}$.
  It then follows that, 
  for all $k\in\omega$ and $U\in \tau_{\zeta_k}$ (i.e. for all $U\in \tau_{\zeta}$),
  for all but finitely many $n\in\omega$,  $H(U,k,n)\subset H^q_n$.

  Finally we prove that $\{ H^q_n : n\in\omega\}$ witnesses that the $mH$-property
  with respect to the sequence $\{D_n : n\in\omega\}$ holds at the point $q$.

  Consider an $q\in W\in\tau_{\mathfrak b}$.
By induction hypothesis \ref{wH1},
 we may choose some $U \in \bigcup_{\beta<\zeta}\tau_\beta$
 so that there is a
 finite set $ S\subset  \mathfrak b $ 
 such that $U  \cap I(q,\hat f_{\delta_\zeta})\setminus W$
 is covered by $\bigcup\{ B_\xi : \xi \in S\}$.
 By possibly having to shrink $U$ we may assume that
  $\zeta \leq \min(S)$. 
  Choose $k\in\omega$ so that $U\in \tau_{\zeta_k}$ 
  and   any $n\in\omega$ such that $|S|< n$
  and $H(U,k,n)\subset U\cap  H^q_n$. Let $T(U,k,n)$ denote
  the anti-chain of size $n$ satisfying that
     $H(U,k,n)\cap Q_t\neq\emptyset$ for each $t\in T(U,k,n)$. 
 Clearly there is a $t\in T(U,k,n)$ that is not in
 the set $\{ \rho_\xi\restriction j : j\in \omega,\ \xi\in S\}$
 because this latter set is a union of $|S|$ many chains. 
 Therefore the non-empty set $U\cap Q_t\cap H^q_n\subset I(q,\hat f_{\zeta})$
 is a subset
 of $W$. This proves that $W\cap H^q_n$ is non-empty for all but
 finitely many $n\in\omega$.
 \end{proof}

This next construction  is   the same as  in 
the proof of Theorem \ref{justwH} but it uses
the fact that countable spaces of character
less $\mathfrak p$ are Fr\'echet-Urysohn and
the same properties related to the $I(q,f)$'s to
preserve large enough families of converging sequences. 

\begin{theorem}[$\mathfrak p = \mathfrak b$]
There is a countable Fr\'echet-Urysohn space that is
 not H-separable.
\end{theorem}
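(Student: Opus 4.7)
The plan is to run the same length-$\mathfrak b$ recursion through increasing elementary submodels $\{M_\alpha:\alpha<\mathfrak b\}$ as in the proof of Theorem \ref{justwH}, but with two modifications: at each stage maintain a family $\mathcal I_\alpha$ of $\tau_\alpha$-converging sequences of cardinality less than $\mathfrak b$, and demand that each $\tau_\alpha$-closure witness arising from a pair in $M_\alpha$ be realized by a member of $\mathcal I_{\alpha+1}$. The hypothesis $\mathfrak p = \mathfrak b$ enters precisely here: because $|\tau_\alpha|<\mathfrak b = \mathfrak p$, whenever $r$ is in the $\tau_\alpha$-closure of $A$ but not in $A$, the filter base $\{U\cap A : r\in U\in\tau_\alpha\}$ consists of infinite sets, has the strong finite intersection property, and has size less than $\mathfrak p$, so admits a pseudo-intersection giving an infinite subset of $A$ that $\tau_\alpha$-converges to $r$.

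At stage $\alpha$, after choosing $M_\alpha$, $\hat f_{\delta_\alpha}$, $\rho_\alpha$, $B_\alpha$ and $\tilde\delta$ as in the proof of Theorem \ref{justwH}, I would enumerate the fewer than $\mathfrak b$ many pairs $(r,A)\in\mathbb Q\times (M_\alpha\cap[\mathbb Q]^{\aleph_0})$; for each pair with $r$ in the $\tau_\alpha$-closure of $A$, extract $I_{r,A}\subset A$ by elementarity and pseudo-intersection as above, and thin further so that $\varphi(I_{r,A})$ is either a chain or an anti-chain in $2^{<\omega}$. Set $\mathcal I_{\alpha+1}=\mathcal I_\alpha\cup\{I_{r,A}\}$ and apply Lemma \ref{mainLemma} with $\mathcal I=\mathcal I_{\alpha+1}$ in place of the $\emptyset$ used in Theorem \ref{justwH}, keeping the remaining inputs $\tilde\tau=\bigcup_{\beta<\alpha}\tau_\beta$, $\tilde f=f_{\tilde\delta}$ and $B=B_\alpha$. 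The choice of $\rho_\alpha$ must additionally be made almost disjoint from $\varphi(I)$ for each $I\in\mathcal I_{\alpha+1}$, which is possible within $M_\alpha$ because the chain/anti-chain dichotomy makes each $\varphi(I)$ contribute only finitely many infinite branches to avoid. Clause (3) of Lemma \ref{mainLemma} then makes every $I\in\mathcal I_{\alpha+1}$ a $\tau_{\alpha+1}$-converging sequence, and Proposition \ref{ringRule}, combined with the branch-avoidance built into all subsequent $\rho_\beta$'s, preserves convergence at every later stage, so every $I\in\bigcup_\alpha\mathcal I_\alpha$ remains $\tau_{\mathfrak b}$-converging.

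To derive Fr\'echet-Urysohn, I would take arbitrary $q\in\mathbb Q$ and $A\subset\mathbb Q$ with $q$ in the $\tau_{\mathfrak b}$-closure of $A$ and mirror the reflection in the mH-separability argument of Theorem \ref{justwH}: pass through a continuous chain of elementary submodels $\{M^q_\alpha:\alpha<\mathfrak b\}$ to locate some $\zeta$ on the associated club in $\mathfrak b$ and some $A'\in M^q_\zeta\cap M_\zeta$ with $q$ in the $\tau_\zeta$-closure of $A'$ and $A'\cap A$ infinite; the corresponding $I_{q,A'}$ added at stage $\zeta$ then $\tau_{\mathfrak b}$-converges to $q$ and, after passing to a cofinite subsequence, lies inside $A$. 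The main obstacle is exactly this final reflection: given an $A$ outside every $M_\alpha$, produce an $A'\in M_\zeta$ whose added sequence $I_{q,A'}$ is almost contained in $A$. This is where the $I(q,\hat f_{\delta_\zeta})$-machinery and Proposition \ref{ringRule} earn their keep, for the closure type of $A$ at $q$ is coded by the set of $t\in 2^{<\omega}$ for which $q$ lies in the $\tau_{\mathfrak b}$-closure of $A\cap Q_t$, and this combinatorial type reflects into $M_\zeta$ in a way that keeps the extracted $I_{q,A'}$ inside the correct $Q_t$-pieces and therefore inside $A$.
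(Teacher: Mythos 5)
Your construction-side modifications (carrying families $\mathcal I_\alpha$ of converging sequences, extracting them by pseudo-intersection from filters of size less than $\mathfrak p$, and keeping $\rho_\alpha$ almost disjoint from each $\varphi(I)$) are coherent and essentially reproduce what the $\mathfrak b=\mathfrak c$ construction does. But they cannot by themselves yield the Fr\'echet--Urysohn property when $\mathfrak b<\mathfrak c$: the recursion has only $\mathfrak b$ stages and each $M_\alpha$ has size less than $\mathfrak b$, so only $\mathfrak b$ many pairs $(r,A)$ ever appear in any $M_\alpha$, while there are $\mathfrak c$ many sets $A$ to handle. Everything therefore rests on your final ``reflection'' step, and that step is where the genuine gap lies. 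Given an arbitrary $A$ with $q$ in its $\tau_{\mathfrak b}$-closure, you propose to find $A'\in M_\zeta$ whose pre-installed sequence $I_{q,A'}$ is almost contained in $A$, arguing that the set of $t$ with $q$ in the closure of $A\cap Q_t$ ``reflects'' and keeps $I_{q,A'}$ inside the right $Q_t$-pieces and hence inside $A$. This does not follow: lying in the same $Q_t$'s as $A$ in no way forces a sequence to be a subset of $A$, since $A\cap Q_t$ is an arbitrary subset of $Q_t$ and $A'$ is constrained only to live in a model of size less than $\mathfrak b$ that was fixed before $A$ was given. No choice of $A'$ from $M_\zeta$ can in general have its witness land inside $A$.

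The paper's proof avoids pre-installed witnesses altogether (it runs the recursion of Theorem \ref{justwH} with $\mathcal I=\emptyset$) and instead extracts a convergent sequence from $A$ a posteriori. After reflecting to a cofinality-$\omega$ ordinal $\mu$ with $|\tau_\mu|<\mathfrak p$, one shows $q$ is in the $\tau_\mu$-closure of $A\cap I(q,\hat f_\mu)$, so $\mathfrak p=\mathfrak b$ gives sequences $I_k\subset A\cap I(q,\hat f_{\delta_{\mu_k}})$ that $\tau_{\mu_k}$-converge to $q$. The missing idea in your proposal is the subsequent case analysis on $\varphi(I_k)$: if some $\varphi(I_k)$ is a singleton or an anti-chain, then $I_k$ meets each closed discrete set $B_{\xi+1}$ (which $\varphi$ maps into a single branch) finitely, so the inductive covering hypothesis upgrades $\tau_{\mu_k}$-convergence to $\tau_{\mathfrak b}$-convergence; if every $\varphi(I_k)$ runs along a branch $\psi_k$, one either finds a convergent subsequence of some $I_k$ or shows the $\psi_k$ are distinct and diagonalizes, choosing one point from each $I_k$ to produce a sequence $S\subset A$ with $\varphi(S)$ an infinite anti-chain, which then converges for the same reason. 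This mechanism, in the spirit of Proposition \ref{ringRule}, is what actually earns the Fr\'echet--Urysohn property, and it is absent from your argument.
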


\begin{proof} For the reader's convenience we repeat
the main details but also introduce some minor changes at limit steps.
We recursively  construct
our  increasing family $\{ \tau_\alpha : \alpha < \mathfrak b\}$
of clopen bases for topologies on $\mathbb Q$ together with 
the special  family
 $\{ \rho_{\alpha+1} : \alpha < \mathfrak b\}
 \subset 2^\omega$ 
 by also constructing an increasing chain,
  $\{ M_\alpha : \alpha < \mathfrak b\}$,
 of elementary
 submodels of $H(\mathfrak c^+)$ as per Proposition \ref{getModels}.
 For this construction, we choose to make $\{ M_\alpha :\alpha <\mathfrak b\}$
  a continuous increasing chain,  and for limit $\alpha$,
    $\tau_\alpha $ will equal $\bigcup\{ \tau_\beta : \beta < \alpha\}$. 
Similarly for limit $\alpha$ our choice for $B_\alpha$ will be the empty set.

 For each $\alpha <\mathfrak b$,  
  let $\delta_\alpha$ be the ordinal
    $M_\alpha\cap \mathfrak b$  and again choose $\hat f_{\delta_\alpha}
    \geq f_{\delta_\alpha}$
    in $M_{\alpha+1}\cap \omega^\omega$ so as to 
    mod finite dominate $M_\alpha\cap \omega^\omega$.
  The elementarity will be
    critical for ensuring that each $I(q,\hat f_{\delta_\alpha})$ 
    captures enough information,
    combined with inductive hypotheses motivated by
    Proposition \ref{ringRule},
    to ensure that the final
    topology is Fr\'echet-Urysohn.

For $0<\gamma  < \alpha$
 and $\beta+1 < \alpha$ we assume we are preserving
these inductive properties:
   
\begin{enumerate}
\item for each $t\in 2^{<\omega}$, $Q_t$ is $\tau_{\gamma}$-dense,
\item $M_\gamma$ is an elementary submodel  of $H(\mathfrak c^+)$
satisfying that $|M_\gamma|<\mathfrak b$ and $M_\gamma\cap \mathfrak b = 
 \delta_\gamma\in \mathfrak b$,
 \item each of $ \{Q_t:t\in 2^{<\omega}\}$ and $\{f_\xi: \xi\in\mathfrak b\}$
  are elements of $M_0$,
\item  $\{ M_\xi : \xi\leq \beta\}$ is an  element  of $M_{\beta+1}$,
\item $\hat f_{\delta_\beta}$ is an element of $M_{\beta+1}\cap \omega^\omega$
that is strictly increasing and mod finite dominates $M_\beta\cap \omega^\omega$,
\item   $\tau_\beta$ is a subset of $M_\beta$  and $\tau_{\beta+1}\setminus \tau_\beta\in M_{\beta+1}$ is countable,
\item $\rho_{\beta+1}\in 2^\omega\cap M_{\beta+1} \setminus M_\beta $,
 \item the\label{H3} set $B_{\beta+1} = \bigcup\{ Q_t \cap \{ q_\ell : \ell < f_{\beta}(|t|) \}: 
  t\in \{ \rho_{\beta+1}\restriction n : n\in \omega\}\}$
is closed and discrete with respect to 
$\tau_{\beta+1}$,
 \item for\label{mH1} each $q\in W\in \tau_\gamma$ and $\mu<\gamma$, there is a
 $q\in U\in \tau_\mu$
 such that $ U\cap I(q,\hat f_{\delta_\mu}) \setminus W$ is 
 covered by a finite union from $\{ B_{\xi+1} : \mu \leq \xi < \gamma\}$.
\end{enumerate}

We choose $M_0$ exactly as in the proof of
 Theorem \ref{justwH}. For a limit ordinal $\alpha < \mathfrak b$,
   we set $ M_\alpha = \bigcup\{ M_\beta : \beta < \alpha\}$ and
      $\tau_\alpha = \bigcup\{ \tau_\beta : \beta < \alpha\}$ and there
      is nothing more to do.

      Now assume that $\alpha = \beta+1$.  
We choose   any elementary submodel $M_\alpha$ of $H(\mathfrak c^+)$
such that $ \{M_\beta , \tau_\beta\}\in M_\alpha$
 and such that $|M_\alpha | < \mathfrak b$
and $M_\alpha\cap \mathfrak b =\delta_\alpha>\alpha$. 
We also choose $\hat f_{\delta_\beta}\geq f_{\delta_\beta}$
to be any element
of $M_\alpha\cap \omega^\omega$ that is strictly increasing
and that mod finite dominates
 $M_\beta\cap \omega^\omega$.
Choose any $\rho_\alpha \in  2^\omega\cap M_\alpha \setminus M_\beta$ 
and set  $B_\alpha =
\bigcup\{ Q_t \cap \{ q_\ell : \ell < f_\alpha(|t|) \}: 
  t\in \{ \rho_\alpha\restriction n : n\in \omega\}\}$.
  The set $B_\alpha$ is an element of $M_\alpha$.  
  Define
  $\tau_\alpha$ to be $\tau_\beta \cup
  \{U^\alpha_n : n\in \omega\}$ where $\{U^\alpha_n : n\in\omega\}$
  is obtained by applying Lemma \ref{mainLemma}, working within
   $M_\alpha$, to the
  values $\tilde\tau = \tau_\beta$,
  $\mathcal I = \emptyset$,
   $A=\emptyset$, $r=0$, $\tilde f = \hat f_{\delta_\beta}+f_\alpha  $, and
    $B = B_\alpha$.

This completes the construction, and we set $\tau_{\mathfrak b}$
equal to $\bigcup\{ \tau_\alpha : \alpha < \mathfrak b\}$.  
 It follows by Lemma \ref{notH} that $\tau_{\mathfrak b}$ is not
  H-separable. Now
we prove that $(\mathbb Q, \tau_{\mathfrak b})$ is 
 Fr\'echet-Urysohn.

 Consider any $A\subset \mathbb Q$
 and $\tau_{\mathfrak b}$-limit point $q$ of $A$.   
 We again note that $q$ is a $\tau_\alpha$-limit point 
 of $A$ for every $\alpha < \mathfrak b$. 
Fix a continuous increasing chain, $\{ M^q_\alpha : \alpha < \mathfrak b\}$,
   of elementary submodels of $H(\mathfrak c^+)$ satisfying that
   $A$ and 
     $\{ M_\beta: \beta < \mathfrak b\}$ are elements of $ M^q_0$ and for all
      $\alpha<\mathfrak b$, $M^q_\alpha$ has cardinality less
      than $\mathfrak b$ and $\mu_\alpha = M^q_\alpha\cap \mathfrak b$
      is an element of $\mathfrak b$.  Let $C$ be a closed 
      and unbounded set of $\zeta\in \mathfrak b$ that satisfy
       $M^q_\zeta \cap \mathfrak b = \zeta$. Choose a strictly
       increasing sequence $\{ \zeta_k : k\in\omega\}\subset C$,
        and let $\mu$ be the supremum. Note also that, for $k\in\omega$,
         $M^q_{\zeta_k}\subset M^q_\mu$ and that 
        $\delta_\mu = \mu$.

For each $k\in\omega$, using that $\tau_{\zeta_k}$ is a
 Fr\'echet-Urysohn topology, choose a sequence $J_k\subset A$
 that is an element of $M^q_{\zeta_{k+1}}$ and that 
 $\tau_{\zeta_k}$-converges to $q$.    
Let  $L_k = \{ n\in  \omega : J_k\cap U(q,n)\setminus U(q,n+1)\neq\emptyset\}$
 and define the  function 
 $g_k\in\omega^\omega\cap M^q_{\zeta_{k+1}}$ 
 according to the rule that for each
  $n\in L_k$, 
  there is an $\ell< g_k(n)$ such
that $q_\ell \in I_k\cap U(q,n)\setminus U(q,n+1)$
and for each  $m\in\omega\setminus L_k$,
  $g_k(m) = g_k(\min(L_k\setminus m))$.  
By elementarity, there is a $\xi_k\in \mathfrak b
\cap M^q_{\zeta_{k+1}}$ such that $f_{\xi_k}(n)> g_k(n)$
for infinitely many $n$. It follows that $\hat f_{\delta_{\zeta_{k+1}}}(n)
> g_k(n)$ for infinitely many $n$. Since $\hat f_{\delta_{\zeta_{k+1}}}$ is
a strictly increasing function, it follows that for each $n$
such that $\hat f_{\delta_{\zeta_{k+1}}}(n)
> g_k(n)$, we also have that $g_k(\min(L_k\setminus n)) < 
\hat f_{\delta_{\zeta_{k+1}}}(\min(L_k\setminus n))$. 
This implies that 
$I(q,\hat f_{\delta_{\zeta_{k+1}}})\cap J_k$ is infinite.
Since $\delta_{\zeta_{k+1}}<\delta_\mu=\mu$,
 it similarly follows that, for each $k\in\omega$,
    $I(q,\hat f_{\mu}) \cap J_k$ is infinite.  

    \begin{claim}  $q$ is in the $\tau_\mu$-closure of
       $A\cap I(q,\hat f_{\mu})$. 
    \end{claim}

  \bgroup

\def\proofname{Proof of Claim.}

\begin{proof} Let $U$ be any element of $\tau_\mu$. 
  Choose $k\in\omega$ so that $U\in \tau_{\zeta_k}$
   and observe that  since $U$ contains a cofinite subset of $J_k$,
  $   U\cap (J_k\cap I(q,\hat f_{\mu}))\subset U\cap A\cap I(q,\hat f_\mu)$ is infinite.
\end{proof}

Since $|\tau_\mu|<\mathfrak p$ there are sequences contained
 in $A\cap I(q,\hat f_\mu)$ that $\tau_\mu$-converge to $q$. Now fix any $\beta\in \mu$
 and, trivially, observe that ``there exists some $\mu>\beta$ satisfying
 that there exists some sequence contained in $A\cap I(q,f_{\delta_\mu})$
 that  $\tau_{\mu}$-converges to $q$''. 
 Now, working in $M^q_\mu$, apply elementarity,
  and choose, for each $k\in\omega$, some $\zeta_k < \mu_k < \mu$ such
  that there is sequence $I_k\subset A\cap I(q,\hat f_{\delta_{\mu_k}})$ 
  that $\tau_{\mu_k}$-converges to $q$. By passing to subsequences,
   we can assume that, for each $k$, 
     $\varphi(I_k)$ is one of: a singleton $t_k\in 2^{<\omega}$,
      an infinite anti-chain $T_k\subset 2^{<\omega}$, or
      an infinite subset of $\{ \psi_k\restriction n : n\in\omega\}$
      for some $\psi_k\in 2^\omega\cap M^q_\mu$. 

 \begin{claim} If, for some $k\in\omega$, $\varphi(I_k)$ is either a singleton
  or an anti-chain, then $I_k$ converges to $q$ with respect to $\tau_{\mathfrak b}$. 
 \end{claim}

 \begin{proof}  
 Consider any $W\in \tau_{\mathfrak b}$.  By inductive condition \ref{mH1},
  there is   $U\in \tau_{\mu_k}$ and a finite set $\{\xi_i : i<\ell\}\subset
     \mathfrak b \setminus \mu_k$ satisfying that
       $U\cap I(q,\hat f_{\mu_k}) \setminus W$ is contained in
         $\bigcup\{ B_{\xi_i+1} : i< \ell\}$.  
Of course $\bigcup\{ \varphi(B_{\xi_i+1}) : i<\ell\}$ is a subset of
   $\bigcup \{ \rho_{\xi_i+1}\restriction n : i< \ell,\ \ n\in\omega\}$. 
The set $\bigcup\{ B_{\xi_i+1} : i< \ell\}$ is almost disjoint
from every $Q_t$ ($t\in 2^{<\omega}$) and for each
 $t\in 2^{<\omega}\setminus 
  \bigcup \{ \rho_{\xi_i+1}\restriction n : i< \ell,\ \ n\in\omega\}$,
  it 
      is disjoint from
      $Q_t$.   
      It therefore follows that $I_k\cap 
        \bigcup\{ B_{\xi_i+1} : i< \ell\} $ is finite. 
        Since $U$ contains $I_k$ mod finite, it follows
        that $W$ does as well.
 \end{proof}

 The remaining case is that, for each $k$, $\varphi(I_k)$
 is an infinite chain contained in $\{ \psi_k\restriction n : n\in \omega\}$
 with $\psi_k\in M^q_\mu$.  
 
 \begin{claim} If for some $k\in\omega$, $q$  is in the $\tau_{\mu}$-closure
 of $I_k$, then $I_k$ has a subsequence that $\tau_{\mathfrak b}$-converges to $q$.
 \end{claim}

 \begin{proof}  If $\psi_k\in \{ \rho_{\xi+1} : \xi <\mathfrak b\}$, then,
  by elementarity, 
 choose $\xi_k\in M^q_\mu$ so that $\psi_k = \rho_{\xi_k+1}$.  Choose 
  $ k +1 < \bar k\in \omega$ so that $\xi_k < \mu_{\bar k}$. 
  By assumption $I_k \subset I(q, \hat f_{\delta_{\mu_{\bar k}}})$ and
     $q$ is a $\tau_{\mu_{\bar k}}$-limit of $I_k$. Choose any infinite
      $I'_k\subset I_k$ that converges to $q$ with respect to 
      $\tau_{\mu_{\bar k}}$. Note that $I'_k$ is almost disjoint
      from $B_{\gamma+1}$ for all $\mu_{\bar k}\leq \gamma$. 
      Therefore, by induction hypothesis \ref{mH1}, $I_k'$ 
      converges to $q$ with respect to $\tau_{\mathfrak b}$. 
 \end{proof}

  Therefore, we now assume that $q$ is not in the $\tau_\mu$-closure
  of $I_k$ for each $k\in \omega$, and, as a consequence we obtain
  this next claim.

  \begin{claim} The
  set $\{ \psi_k : k\in\omega\}$ is infinite.
  \end{claim}

  \begin{proof} Assume that there is a single $\psi\in 2^\omega$
  such that $\psi_k = \psi$ for infinitely many $k\in \omega$. 
  As in the previous proof, choose $\bar k$ so that, if 
   $\psi \in \{\rho_\xi : \xi\in\mathfrak b\}$, then
      $\psi \in \{ \rho_\xi : \xi < \mu_{\bar k}\}$. 
   Choose any $k >\bar k$ so that $\psi = \psi_k$. Now, 
   we have that for all $\mu_{ k} < \beta < \mu$,
   $U^\beta_{m_q} $ mod finite contains
    $ I(q,\hat f_{\delta_\beta}) \setminus B_{\beta+1}$, 
    where $m_q$ is chosen so that $q\in U^\beta_{m_q}$. 
    Since $I_k$ is mod finite contained in 
       $I(q,\hat f_{\delta_\beta})\setminus B_{\beta+1}$
       (because $\rho_{\beta+1}\neq \psi$) 
       it would follow that $I_k$ converges to $q$ with 
        respect to $\tau_\mu$.
  \end{proof}

By the previous claim we can, by re-indexing an infinite subsequence
 of $\{ \psi_k : k\in \omega\}$, assume that they are pairwise
 distinct and converge to some $\psi\in 2^\omega$ and that
    $\{ n_k : k\in\omega\}$ strictly increases where
     $n_k = \min\{ n : \psi(n)\neq \psi_k(n)\}$. 
  Next, we pass
 to a cofinite subset of each $I_k$ and thereby assume that
 the sets  $\{ \varphi(I_k) : k\in \omega\}$ are pairwise disjoint
 by simply ensuring that $|t|>n_k$ for all $ t\in \varphi(I_k)$.

 \begin{claim} For every $U\in \tau_\mu$, $I_k\subset^* U$ for all
 but finitely many $k\in \omega$.
 \end{claim}

 \begin{proof}  For each $U\in \tau_{\mu_k}$, $I_\ell\subset ^* U$
 for all $k<\ell$ since $I_\ell$ converges to $q$ with respect
 to $\tau_{\mu_\ell}$ and $U\in \tau_{\mu_\ell}$. 
 \end{proof}

  \egroup

  Finally, since $|\tau_\mu|<\mathfrak b$, there is a selection 
   $S\subset \bigcup\{ I_k : k\in \omega\}$  ($|S\cap I_k| = 1$
   for all $k\in \omega$)  satisfying that 
    $S\subset^* U$ for all $U\in \tau_\mu$.
We have produced a sequence $S$ that converges to $q$ with respect
to $\tau_\mu$ such that $\varphi(S)$ is an infinite anti-chain.
Clearly $B_{\beta+1}\cap S$ is finite for all $\mu\leq\beta<\mathfrak b$,
 and, by induction hypothesis \ref{mH1}, this completes the proof.
\end{proof}

\begin{bibdiv}

\def\cprime{$'$} 

\begin{biblist}

\bib{Arh1}{article}{
   author={Arhangel\cprime ski\u i, A. V.},
   title={Frequency spectrum of a topological space and classification of
   spaces},
   language={Russian},
   journal={Dokl. Akad. Nauk SSSR},
   volume={206},
   date={1972},
   pages={265--268},
   issn={0002-3264},
   review={\MR{0394575}},
}

\bib{BaMaZd}{article}{
   author={Bardyla, Serhii},
   author={Maesano, Fortunato},
   author={Zdomskyy, Lyubomyr},
   title={Selective separability properties of Fr\'echet-Urysohn spaces and
   their products},
   journal={Fund. Math.},
   volume={263},
   date={2023},
   number={3},
   pages={271--299},
   issn={0016-2736},
   review={\MR{4669147}},
   doi={10.4064/fm230522-13-10},
}

\bib{BaDo}{article}{
   author={Barman, Doyel},
   author={Dow, Alan},
   title={Selective separability and ${\rm SS}^+$},
   journal={Topology Proc.},
   volume={37},
   date={2011},
   pages={181--204},
   issn={0146-4124},
   review={\MR{2678950}},
}

\bib{BBM}{article}{
   author={Bella, Angelo},
   author={Bonanzinga, Maddalena},
   author={Matveev, Mikhail},
   title={Variations of selective separability},
   journal={Topology Appl.},
   volume={156},
   date={2009},
   number={7},
   pages={1241--1252},
   issn={0166-8641},
   review={\MR{2502000}},
   doi={10.1016/j.topol.2008.12.029},
}

\bib{DowFrechet}{article}{
  author={Dow, Alan},
   title={$\pi$-weight and the Fr\'echet-Urysohn property},
   journal={Topology Appl.},
   volume={174},
   date={2014},
   pages={56--61},
   issn={0166-8641},
   review={\MR{3231610}},
   doi={10.1016/j.topol.2014.06.013},
}

\bib{DowSteprans}{article}{
   author={Dow, Alan},
   author={Stepr\=ans, Juris},
   title={Countable Fr\'echet $\alpha_1$-spaces may be first countable},
   journal={Arch. Math. Logic},
   volume={32},
   date={1992},
   number={1},
   pages={33--50},
   issn={0933-5846},
   review={\MR{1186465}},
   doi={10.1007/BF01270393},
}

\bib{GarySakai}{article}{
   author={Gruenhage, Gary},
   author={Sakai, Masami},
   title={Selective separability and its variations},
   journal={Topology Appl.},
   volume={158},
   date={2011},
   number={12},
   pages={1352--1359},
   issn={0166-8641},
   review={\MR{2812487}},
   doi={10.1016/j.topol.2011.05.009},
}

\bib{HRG14}{article}{
   author={Hru\v{s}\'{a}k, M.},
   author={Ramos-Garc\'{\i}a, U. A.},
   title={Malykhin's problem},
   journal={Adv. Math.},
   volume={262},
   date={2014},
   pages={193--212},
   issn={0001-8708},
   review={\MR{3228427}},
   doi={10.1016/j.aim.2014.05.009},
}

\bib{Kunen}{book}{
   author={Kunen, Kenneth},
   title={Set theory},
   series={Studies in Logic and the Foundations of Mathematics},
   volume={102},
   note={An introduction to independence proofs},
   publisher={North-Holland Publishing Co., Amsterdam-New York},
   date={1980},
   pages={xvi+313},
   isbn={0-444-85401-0},
   review={\MR{0597342}},
}

 \bib{RepovH}{article}{
   author={Repov\v s, Du\v san},
   author={Zdomskyy, Lyubomyr},
   title={Products of $H$-separable spaces in the Laver model},
   journal={Topology Appl.},
   volume={239},
   date={2018},
   pages={115--119},
   issn={0166-8641},
   review={\MR{3777326}},
   doi={10.1016/j.topol.2018.02.021},
}

		\end{biblist}
		\end{bibdiv}

\end{document}